\numberwithin{equation}{section}
\newtheorem{thm}{Theorem}[section]
\newtheorem{lma}[thm]{Lemma}
\newtheorem{cor}[thm]{Corollary}
\renewcommand{\epsilon}{\varepsilon}
\newcommand{\eps}{\varepsilon}
\renewcommand{\geq}{\geqslant}
\renewcommand{\leq}{\leqslant}
\newcommand{\ubd}{\overline{\dim}_{\textup{B}}}
\newcommand{\ad}{\dim_{\mathrm{A}} }
\newcommand{\qad}{\dim_{\mathrm{qA}} }
\newcommand{\as}{\dim^\theta_{\mathrm{A}} }
\newcommand{\bd}{\dim_{\mathrm{B}}  }
\newcommand{\spi}{\mathcal{S}}
\newcommand{\spp}{\mathcal{S}_p}
\title{ \vspace{-20mm} On H\"{o}lder solutions to the spiral winding problem}
\author{Jonathan M. Fraser}
\begin{document}


\maketitle

\begin{abstract}
The \emph{winding problem} concerns understanding the regularity of  functions which map a line segment onto a  spiral.  This problem has relevance in fluid dynamics and conformal welding theory, where spirals arise naturally.  Here we interpret `regularity' in terms of H\"{o}lder exponents and establish  sharp results for spirals with polynomial winding rates, observing that the sharp H\"{o}lder  exponent of the forward map and its inverse satisfy a formula reminiscent of Sobolev conjugates. We also investigate the dimension theory of these spirals, in particular, the Assouad dimension, Assouad spectrum and box dimensions.  The aim here is to compare the bounds on the H\"{o}lder exponents in the winding problem coming directly from knowledge of dimension (and how dimension distorts under H\"{o}lder  image) with the sharp results.  We find that the Assouad spectrum provides the best information, but that even this is  not sharp.  We also find that the Assouad spectrum is  the only `dimension' which distinguishes between spirals with different polynomial winding rates in the superlinear regime.
\\ \\ 
\emph{Mathematics Subject Classification} 2010: primary: 28A80, 26A16; secondary: 37C45, 37C10, 28A78, 34C05.
\\
\emph{Key words and phrases}: spiral, winding problem, H\"{o}lder  exponents, Assouad dimension, box dimension, Assouad spectrum.
\end{abstract}

\section{Introduction: spirals and the winding problem}

Spirals appear naturally across mathematics and wider science, often arising via a dynamical system or geometric constraint.  One of the simplest examples is the \emph{Archimedean spiral}, which is the trajectory of a point moving away from its initial position with constant speed along a line which rotates with constant speed. In fluid dynamics spiral trajectories arise in  various models of fluid turbulence and vortex formation.  For example, in the well-studied  \emph{$\alpha$-models}  for fluid turbulence   polynomial spirals appear as the evolution  of the half-line $[0,\infty) \subset \mathbb{R}^2$  under the resulting 2-dimensional flow and the  polynomial winding rate depends on the parameter $\alpha$, see \cite{foi}.  See \cite{mand, moff, vass, vasshunt} for more specific examples of spirals appearing in fluid turbulence and \cite{tricot,zub} for other dynamical examples giving rise to spiral trajectories where particular attention is paid to dimension.

In a different direction, spirals   arise as solutions to various geometric problems.  For example, the \emph{Lituus} is the locus of points $z \in \mathbb{C}$ preserving the area of the circular sector $\{w : \arg (w) \in (0,\arg(z)), |w| \leq |z|\}$ (including multiplicity when $\arg(z)>2\pi$).  Consider also the \emph{hyperbolic spiral} which is the `inverse' of the Archimedean  spiral.   A more sophisticated setting where spirals have proved important is in the theory of \emph{conformal welding}.  This  considers the regularity of the induced self-homeomorphism of an oriented Jordan curve arising by composing  a Jordan mapping on the interior with the inverse of a Jordan mapping on the exterior.  Jordan curves defined using \emph{logarithmic spirals} (see below) were shown in \cite{unwindspirals} to exhibit an interesting intermediate phenomenon (non-differentiable, but Lipschitz) not previously observed.

Wherever spirals arise, be it via a dynamical system or as the solution to a geometric problem, the form and regularity of the spiral holds relevance for the underlying model or problem. Of course there are many ways to quantify regularity, for example, via various notions of fractal dimension since infinitely wound spirals can be viewed as fractals.  Here we consider the winding problem, which characterises the regularity of a spiral by the regularity of homeomorphisms mapping a line segment to the spiral - such a function is a \emph{solution} to the winding problem, since it performs the task of \emph{winding} the line segment to the spiral.  A common formulation of this   problem is to ask whether or not bi-Lipschitz solutions exist, see \cite{spirals, unwindspirals}.  Here we search for bi-\emph{H\"{o}lder} solutions in situations where bi-Lipschitz solutions do not exist.  This problem has the advantage of applying to a larger class of spirals.  In particular, spirals arising in nature via a dynamical process tend to have polynomial winding rates, which do not admit bi-Lipschitz solutions.  Moreover, the H\"{o}lder version of the problem is more flexible since, once H\"{o}lder solutions are known to exist, one can consider the more refined problem  of optimising the H\"{o}lder exponents of the solution.

Given a \emph{winding function} $\phi: [1,\infty) \to (0,\infty)$, which we assume is continuous, strictly decreasing, and satisfies $\phi(x) \to 0$ as $x \to \infty$, the associated  \emph{spiral}  is the set
\[
\spi(\phi) = \{ \phi(x) \exp(ix) : 1<x<\infty\} \subset \mathbb{C}.
\]
The winding problem concerns the regularity of $\spi(\phi)$ by asking how little distortion is required to map $(0,1)$ onto $\spi(\phi)$.  A well-known and important example of this is that when $\phi(x) = e^{-cx}$ for some $c>0$, it is possible to map $(0,1)$ onto  the so-called  \emph{logarithmic spiral} $\spi(\phi)$ via a bi-Lipschitz map.  This was first established by Katznelson, Nag and Sullivan \cite{unwindspirals}.  Moreover, if $\phi$ is sub-exponential, that is, if
\[
\frac{\log \phi(x)}{x} \to 0 \qquad (x \to \infty),
\]
then this cannot be done, thus illustrating that the logarithmic family is sharp for the bi-Lipschitz problem.  See Fish and Paunescu \cite{spirals} for an elegant proof of this latter fact.  In this paper, we wish to understand the sub-exponential regime by considering the H\"{o}lder analogue of this problem.  Namely, given a sub-exponential winding function, is it possible to map $(0,1)$ onto  $\spi(\phi)$ via a H\"{o}lder  map and if so what is the optimal H\"{o}lder exponent?  It turns out that it is natural to consider bi-H\"{o}lder maps (H\"{o}lder maps with H\"{o}lder  inverses),  and the interplay between the two H\"{o}lder exponents is crucial.  Indeed, if one wants to achieve the sharp H\"{o}lder exponent for a homeomorphism mapping $(0,1)$ to $\spi(\phi)$, then one must sacrifice the H\"{o}lder exponent of its inverse and vice versa.  This is perhaps surprising since  spirals are `more complex' than $(0,1)$ and therefore one might naively expect that only the H\"{o}lder  exponent of the forward map is relevant, with the inverse map Lipschitz for most reasonable homeomorphisms.

  Recall that, given $\alpha \in (0,1]$ and $X \subset \mathbb{C}$, a  function $f:X \to \mathbb{C}$ is $\alpha$-H\"{o}lder  if there exists a constant $C>0$ such that, for all $x,y \in X$,
\[
| f(x) - f(y) | \leq C |x-y|^\alpha.
\]
In the special case when $\alpha = 1$, the map is called \emph{Lipschitz}.  Moreover, given $0<\alpha \leq 1 \leq \beta < \infty$ we say that a homeomorphism $f:X \to Y$ is $(\alpha, \beta)$-H\"{o}lder  if  there exists a constant $C>0$ such that, for all $x,y \in X$,
\[
 C^{-1} |x-y|^\beta  \leq | f(x) - f(y) | \leq C |x-y|^\alpha,
\]
that is, $f$ is $\alpha$-H\"{o}lder and $f^{-1}:Y \to X$ is $\beta^{-1}$-H\"{o}lder.  Similar to above, $(1,1)$-H\"{o}lder maps are called \emph{bi-Lipschitz}.

In order to simply our exposition we consider a particular family of sub-exponential spirals, namely the polynomial family $\phi_p$ defined by $\phi_p(x) = x^{-p}$ for $p >0$. Our results apply more generally, but we delay discussion of this until Section \ref{reduction}.  In fact the spirals associated with $\phi_p$ are bi-Lipschitz equivalent to any spiral whose winding function is `comparable' to $\phi_p$, see Section \ref{reduction}, and thus behave in exactly the same way in the context of the winding problem.    We write $\spp$ for the spiral associated to $\phi_p$, that is, we write  $\spp = \spi(\phi_p)$ for brevity.   The spirals $\spp$ are sometimes referred to as \emph{generalised hyperbolic spirals} (the hyperbolic spiral corresponds  to $p=1$) and are typically found in nature whenever there is an underlying dynamical process.  In contrast, when spirals form in nature in a static setting, they tend to be logarithmic, that is, have exponential winding functions.

\begin{figure}[H]
  \centering
  \includegraphics[width= \textwidth]{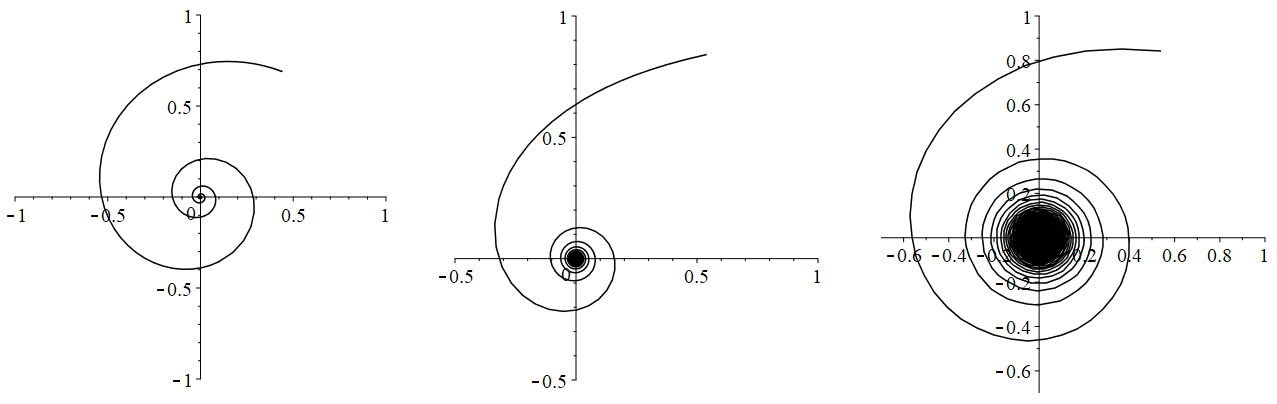}
\caption{Three spirals:  on the left, the \emph{logarithmic spiral} with $\phi(x)=\exp(-x/5)$, in the centre a \emph{hyperbolic spiral} with $\phi(x)=x^{-1}$, and on the right a \emph{Lituus} with  $\phi(x) = x^{-1/2}$.}
\label{fig:gt}
\end{figure}

A well-studied  and important problem in the dimension theory of fractals is to consider how H\"{o}lder  maps affect a given notion of fractal dimension, see \cite{falconer}.  More precisely, given a notion of dimension, such as the Hausdorff or box dimension, one can often relate the dimensions of $f(K)$ and $K$ for all sets $K$ and H\"{o}lder maps $f$ in terms of the H\"{o}lder exponents of $f$.  As such, knowledge of the dimensions of $\spp$ give rise to bounds on the possible H\"{o}lder exponents  in the winding problem.  In Section \ref{dimensions} we consider this problem thoroughly by considering a number of available notions of dimensions.  In particular, we consider the Hausdorff, box, and Assouad dimensions of the spirals $\spp$ for $p>0$, as well as the Assouad spectrum, which interpolates between the box and Assouad dimensions.  We prove that the Assouad spectrum `separates this class', that is, the Assouad spectra depends on $p$ for all $p>0$, whereas, the Hausdorff, box, and Assouad dimensions fail to do this.  We establish precisely how much information can be extracted from dimension theory in the context of the H\"{o}lder version of the winding problem, proving that the best information comes from the Assouad spectrum, but even this is not sharp.  

Motivated by the above, we propose a general programme of research.  Given two bounded homeomorphic sets $X,Y \subset \mathbb{R}^d$, first consider the H\"{o}lder mapping problem which asks for sharp estimates on $\alpha, \beta$ such that  there exists an   $(\alpha, \beta)$-H\"{o}lder map $f$ with  $f(X) = Y$.  Secondly, consider the estimates on $\alpha, \beta$ which come directly from knowledge of the dimensions of $X$ and $Y$.  The problem is then to determine in which situations the information provided by the dimensions is sharp and when it is not, as well as determining which notion of dimension `performs best' in a given setting.  In particular, the polynomial spirals we consider here are examples where sharp information is \emph{not} provided by dimension theory, and where the Assouad spectrum performs best.

\section{Main results: H\"{o}lder solutions to the winding problem}

 We write $X \lesssim Y$ to mean that $X \leq cY$ for some universal constant $c>0$.  We also write $X \gtrsim Y$ to mean $Y \lesssim X$ and $X \approx Y$ to mean that both $X \lesssim Y$ and $X \gtrsim Y$ hold. We  write $|E|$ for the diameter of a set $E$. For real numbers $x,y $ we write $x \wedge y = \min\{x,y\}$ and   $x \vee y = \max\{x,y\}$.

A useful trick which we will use throughout is to decompose $\spp$ into the disjoint union of `full turns'
\[
\spp = \bigcup_{k \geq 1}  \spp^k
\]
where 
\begin{equation} \label{fullturn}
\spp^k = \{ x^{-p} \exp(ix) : 1+2\pi (k-1)<x \leq 1+2\pi k\}
\end{equation}
for integer $k \geq 1$.  Also, given a homeomorphism $f : (0,1) \to \spp$,  we decompose $(0,1)$ into the corresponding  half-open intervals
\begin{equation} \label{turninginterval}
 \mathcal{I}^k = f^{-1}(\spp^k).
\end{equation}

Our first result provides a simple upper bound for the forward H\"{o}lder exponent, $\alpha$.  The inverse H\"{o}lder exponent $\beta$ is trivially bounded below by $1$ and this cannot be improved without considering $\alpha$,  see below.

\begin{thm} \label{sharpbounds1}
If  $f : (0,1) \to \spp$ is an $\alpha$-H\"{o}lder homeomorphism, then $\alpha<p$.
\end{thm}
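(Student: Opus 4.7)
The plan is to exploit the decomposition of $(0,1)$ into the preimages $\mathcal{I}^k = f^{-1}(\spp^k)$ of the full turns and combine a lower bound on $|\spp^k|$ with the H\"older condition on $f$ to bound $|\mathcal{I}^k|$ from below, then sum.

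First I would observe that, since $f$ is a homeomorphism and $\spp^k$ is a connected arc, each $\mathcal{I}^k$ is a connected subset of $(0,1)$, hence an interval. The intervals $\mathcal{I}^k$ are pairwise disjoint and their union is $(0,1)$, so in particular
\[
\sum_{k \geq 1} |\mathcal{I}^k| \;\leq\; 1.
\]
Next I would prove that $|\spp^k| \gtrsim k^{-p}$. For this, fix $x_1 = 1+2\pi(k-1) + \delta$ for small $\delta>0$ and $x_2 = x_1 + \pi$, both lying in the parameter range of $\spp^k$. Since $e^{i x_2} = -e^{i x_1}$, the two corresponding points on $\spp^k$ lie on a common line through the origin on opposite sides, so their distance is exactly $x_1^{-p} + x_2^{-p} \gtrsim k^{-p}$. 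This yields the required lower bound on the diameter.

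Now I apply the $\alpha$-H\"older condition: for any $x,y \in \mathcal{I}^k$,
\[
|f(x) - f(y)| \;\leq\; C |x-y|^\alpha \;\leq\; C |\mathcal{I}^k|^\alpha,
\]
and taking the supremum over $x,y$ gives $|\spp^k| \leq C|\mathcal{I}^k|^\alpha$. Combined with the diameter bound, this yields $|\mathcal{I}^k| \gtrsim k^{-p/\alpha}$ with a uniform implicit constant.

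Finally I substitute into the summability constraint: convergence of $\sum_{k\geq 1} k^{-p/\alpha}$ is necessary, which forces $p/\alpha > 1$, i.e.\ $\alpha < p$. The main points requiring care are the lower estimate on $|\spp^k|$ (the "opposite points" trick above avoids any issue with the spiral being "thin" along the radial direction) and the verification that $\mathcal{I}^k$ is an interval so that the diameter in the H\"older inequality may legitimately be identified with the length of $\mathcal{I}^k$; neither step presents a serious obstacle, so the argument is essentially a convergence test against the partition of $(0,1)$.
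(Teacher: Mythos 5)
Your proposal is correct and follows essentially the same route as the paper: decompose $(0,1)$ into the preimages $\mathcal{I}^k$ of the full turns, use $k^{-p} \approx |\spp^k| \lesssim |\mathcal{I}^k|^\alpha$, and conclude from $\sum_k |\mathcal{I}^k| \leq 1$ that $p/\alpha > 1$. You merely spell out two details the paper leaves implicit (the antipodal-points lower bound on $|\spp^k|$ and the fact that each $\mathcal{I}^k$ is an interval), both of which are handled correctly.
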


\begin{proof}
 We have
\begin{equation} \label{turningest}
k^{-p} \approx  |\spp^k | =  |f(\mathcal{I}^k) |  \lesssim |\mathcal{I}^k|^\alpha
\end{equation}
and therefore
\[
1 =\sum_{k=1}^\infty  |\mathcal{I}^k| \gtrsim \sum_{k=1}^\infty  k^{-p/\alpha}
\]
which forces $\alpha<p$. 
\end{proof}

Next we consider bi-H\"{o}lder functions, which brings in the interplay between the two H\"{o}lder exponents for the first time.

\begin{thm} \label{sharpbounds2}
If $f : (0,1) \to \spp$ is an $(\alpha, \beta)$-H\"{o}lder homeomorphism, then
\[
\beta \geq \frac{p\alpha}{p-\alpha}.
\]
\end{thm}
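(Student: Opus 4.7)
The plan is to combine the forward Hölder estimate from Theorem \ref{sharpbounds1} with a carefully chosen application of the \emph{inverse} Hölder condition to points sampled along a single ray through the origin. By Theorem \ref{sharpbounds1} we may assume $\alpha < p$, for otherwise $\beta \geq p\alpha/(p-\alpha)$ is vacuous; from that proof we also retain the key lower bound $|\mathcal{I}^k| \gtrsim k^{-p/\alpha}$.

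The key geometric observation is that the spiral points
\[
z_k \;:=\; (1+2\pi k)^{-p}\exp\!\bigl(i(1+2\pi k)\bigr) \;=\; (1+2\pi k)^{-p}\,e^{i}, \qquad k \geq 1,
\]
all lie on the single ray $\{re^{i} : r > 0\}$, so
\[
|z_j - z_k| \;=\; \bigl|(1+2\pi j)^{-p} - (1+2\pi k)^{-p}\bigr|.
\]
Setting $b_k := f^{-1}(z_k)$, each $b_k$ is the shared boundary point between the consecutive intervals $\mathcal{I}^k$ and $\mathcal{I}^{k+1}$. After possibly reversing orientation of $(0,1)$, we may assume $b_k$ is increasing with $b_k \to 1$, so that $1 - b_k = \sum_{m > k} |\mathcal{I}^m|$.

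Now apply the $\beta^{-1}$-Hölder property of $f^{-1}$ to the pair $(z_j, z_k)$ with $j > k$ and let $j \to \infty$:
\[
\sum_{m > k} |\mathcal{I}^m| \;=\; \lim_{j \to \infty}(b_j - b_k) \;\leq\; C\lim_{j \to \infty}|z_j - z_k|^{1/\beta} \;\approx\; k^{-p/\beta}.
\]
On the other hand, the estimate $|\mathcal{I}^m| \gtrsim m^{-p/\alpha}$ together with the integral comparison (valid because $p/\alpha > 1$) yields
\[
\sum_{m > k} |\mathcal{I}^m| \;\gtrsim\; \sum_{m > k} m^{-p/\alpha} \;\approx\; k^{1 - p/\alpha}.
\]
Comparing these two tail estimates for large $k$ forces $1 - p/\alpha \leq -p/\beta$, which rearranges to $\frac{1}{\alpha} - \frac{1}{\beta} \geq \frac{1}{p}$ and hence to the desired bound $\beta \geq p\alpha/(p-\alpha)$.

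The main subtlety I anticipate is that the naive choice of probe points --- the two endpoints of a single turn --- would yield only the strictly weaker inequality $\beta \geq \alpha(p+1)/p$. The strength of the argument above comes from collecting the preimages of \emph{all} boundary points on a common ray into the tail sum $\sum_{m > k}|\mathcal{I}^m|$; this is what produces the Sobolev-conjugate-like relation $\frac{1}{\alpha} - \frac{1}{\beta} \geq \frac{1}{p}$ rather than a bound coming from a single two-point comparison.
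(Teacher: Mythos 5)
Your proof is correct and follows essentially the same route as the paper: both compare the tail length $\sum_{m>k}|\mathcal{I}^m|\gtrsim k^{1-p/\alpha}$ (from the forward H\"older estimate of Theorem \ref{sharpbounds1}) against the inverse H\"older bound at the spiral's accumulation point, which gives $\sum_{m>k}|\mathcal{I}^m|\lesssim k^{-p/\beta}$. The only cosmetic difference is that you take a limit along the ray points $z_j$ rather than extending $f$ continuously to $[0,1]$ and comparing with $f(0)=0$ as the paper does.
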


\begin{proof}
Suppose  $f : (0,1) \to \spp$ is an $(\alpha, \beta)$-H\"{o}lder homeomorphism, which we may assume satisfies $f(x) \to 0$ as $x \to 0$ and  for convenience we extend $f$  continuously to $[0,1]$.  By Theorem \ref{sharpbounds1}, we know $p/\alpha>1$.  For integer $l \geq 1$, let
\[
x_l=\sum_{k=l}^\infty |\mathcal{I}^k|,
\]
where $\mathcal{I}^k$ is as in \eqref{turninginterval}. Combining this with \eqref{turningest} yields
\[
1 \lesssim \frac{|f(x_l)-f(0)|}{|x_l|^\beta} \lesssim \frac{l^{-p}}{\left( \sum_{k=l}^\infty |\mathcal{I}^k| \right)^\beta} \lesssim  \frac{l^{-p}}{\left( \sum_{k=l}^\infty k^{-p/\alpha} \right)^\beta}  \lesssim  \frac{l^{-p}}{ l^{(1-p/\alpha)\beta}} \to 0
\]
as $l \to \infty$, if $-p-(1-p/\alpha)\beta<0$.  This forces 
\[
\beta \geq \frac{p\alpha}{p-\alpha}
\]
as required.
\end{proof}

Despite how simple the proofs of Theorems  \ref{sharpbounds1} and \ref{sharpbounds2} were, they turn out to be sharp.  Moreover, there is a particularly natural family of examples demonstrating this sharpness, which we introduce now. Given $t>0$, define $g_t:(0,1) \to \spp$  by
\[
g_t(x) = x^{tp} \exp(i/x^t)
\]
noting  that each $g_t$ is clearly a homeomorphism between $(0,1)$ and $\spp$. 

\begin{thm} \label{gt}
For all $p,t>0$, the map $g_t$ is a
\[
\left(\frac{tp}{t+1} \wedge 1, \ tp \vee 1 \right)\text{-H\"{o}lder.}
\]
 homeomorphism between $(0,1)$ and $\spp$, and these  H\"{o}lder exponents are sharp.
\end{thm}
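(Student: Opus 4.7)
The plan is to work in polar form, writing $g_t(x) = r(x) e^{i\theta(x)}$ with $r(x) = x^{tp}$ and $\theta(x) = x^{-t}$, and to estimate the chord distance via the identity
\[
|g_t(x) - g_t(y)|^2 = (r(x) - r(y))^2 + 2 r(x) r(y) \bigl(1 - \cos(\theta(x) - \theta(y))\bigr).
\]
This yields the universal lower bound $|g_t(x) - g_t(y)| \geq |r(x) - r(y)|$ and, via $1 - \cos z \leq z^2/2$ together with the trivial $1 - \cos z \leq 2$, the two upper bounds $|g_t(x) - g_t(y)| \leq |r(x) - r(y)| + \sqrt{r(x) r(y)}\,|\theta(x) - \theta(y)|$ and $|g_t(x) - g_t(y)| \leq r(x) + r(y)$. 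Throughout I would assume without loss of generality that $0 < y \leq x < 1$ and put $h = x - y$. The key length scale is the one-turn ``wavelength'' $\approx x^{t+1}$, visible both from $|g_t'(x)| \approx x^{tp-t-1}$ and from the fact that the turning intervals $\mathcal{I}^k$ in \eqref{turninginterval} applied to $f = g_t$ have length $\approx k^{-1-1/t} \approx (k^{-1/t})^{t+1}$.

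For the upper bound I would split into the cases $y \leq x/2$ and $y > x/2$. In the first case $h \approx x$ and the modulus bound gives $|g_t(x) - g_t(y)| \lesssim x^{tp}$, so the ratio $|g_t(x) - g_t(y)|/h^\alpha \lesssim x^{tp - \alpha}$ is bounded whenever $\alpha \leq tp$. In the second case $y \approx x$ and the mean value theorem yields $|r(x) - r(y)| \lesssim x^{tp-1} h$ and $|\theta(x) - \theta(y)| \lesssim x^{-t-1} h$. For $h \leq x^{t+1}$ these combine to $|g_t(x) - g_t(y)| \lesssim x^{tp-t-1} h$, hence (using $h \leq x^{t+1}$) the ratio is $\lesssim x^{tp - \alpha(t+1)}$, bounded whenever $\alpha \leq tp/(t+1)$. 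For $h \in (x^{t+1}, x/2)$ the modulus bound takes over and gives $\lesssim x^{tp} \leq h^{tp/(t+1)}$. Together with the Hölder constraint $\alpha \leq 1$, this gives $\alpha = tp/(t+1) \wedge 1$. The lower bound uses only $|g_t(x) - g_t(y)| \geq |r(x) - r(y)|$: when $y \leq x/2$ this is $\geq (1 - 2^{-tp}) x^{tp}$, so the ratio $\gtrsim x^{tp - \beta}$ is bounded below iff $\beta \geq tp$; when $y > x/2$ the MVT gives $|r(x) - r(y)| \gtrsim x^{tp - 1} h$, and combined with $h \leq x/2$ and $\beta \geq 1$ the ratio is $\gtrsim x^{tp - 1} h^{1 - \beta} \gtrsim x^{tp - \beta}$, again bounded below iff $\beta \geq tp \vee 1$.

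Sharpness then follows from explicit test sequences. To rule out $\alpha > tp/(t+1)$, pick $x_n \to 0$ and $y_n < x_n$ with $\theta(y_n) - \theta(x_n) = \pi$; then $g_t(x_n)$ and $g_t(y_n)$ lie on opposite rays from the origin, so $|g_t(x_n) - g_t(y_n)| = r(x_n) + r(y_n) \approx x_n^{tp}$ while $|x_n - y_n| \approx x_n^{t+1}$, and the ratio $\approx x_n^{tp - \alpha(t+1)}$ diverges. To rule out $\beta < tp$, take $y_n = x_n/2$: then $|g_t(x_n) - g_t(y_n)| \leq r(x_n) + r(y_n) \lesssim x_n^{tp}$ while $|x_n - y_n| = x_n/2$, so the ratio $\lesssim x_n^{tp - \beta}$ vanishes. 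The main technical obstacle I anticipate is the case split in the upper bound: neither the derivative/MVT estimate nor the modulus bound alone suffices in every regime, and the binding constraint on $\alpha$ comes from the crossover $h \approx x^{t+1}$, at one full turn, where both estimates are simultaneously tight, while the binding constraint on $\beta$ comes instead from the very different regime $h \approx x$ at the global scale.
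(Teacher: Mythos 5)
Your argument is correct, and while your forward (upper) bound is close in spirit to the paper's --- both hinge on the law-of-cosines identity $|g_t(x)-g_t(y)|^2=(r(x)-r(y))^2+2r(x)r(y)(1-\cos(\theta(x)-\theta(y)))$, the bound $1-\cos z\le z^2/2$, and the one-turn length scale $x^{t+1}$ --- your treatment of the inverse (lower) bound is genuinely different and substantially simpler. The paper proves that $g_t^{-1}$ is $\beta^{-1}$-H\"older by locating, for each pair, the sequence $y_m$ of points inward of $y$ with the same argument and telescoping $|g_t(y_{m+1})-g_t(y)|$ and $|y_{m+1}-y|$ over the winding intervals $\mathcal{I}^k$ (of length $\approx k^{-1/t-1}$), and it then obtains sharpness of $\beta$ only indirectly, via Theorem \ref{sharpbounds2} and the conjugacy $\beta=p\alpha/(p-\alpha)$. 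You instead observe that the reverse triangle inequality $|g_t(x)-g_t(y)|\ge|r(x)-r(y)|=x^{tp}-y^{tp}$, combined with the mean value theorem when $y>x/2$ and the trivial estimate when $y\le x/2$, already yields the sharp exponent $tp\vee 1$: since $r$ is monotone in the parameter there is no cancellation in the radial coordinate, so the angular structure of the spiral is irrelevant to the lower bound, and you get a direct sharpness witness ($y_n=x_n/2$) for free. What your route gives up is the turn-by-turn bookkeeping that the paper reuses in its alternative construction in Section \ref{altproof}, and it leans on the explicit monotone form of $r(x)=x^{tp}$; what it buys is brevity and transparency. Your case split for the upper bound (crossover at $h\approx x^{t+1}$, derivative estimate below it, modulus bound above it, and the separate regime $y\le x/2$) is a clean reorganization of the paper's reduction to the half-turn point $y^*$ and its $\omega$-parameterization, and it closes all the regimes; the sharpness witnesses (antipodal pairs for $\alpha$) coincide with the paper's choice $x=y^*$.
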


We delay the proof of Theorem \ref{gt} until Section \ref{smooth}. It follows immediately from Theorem \ref{gt} that Theorems  \ref{sharpbounds1} and \ref{sharpbounds2} are sharp. We provide an alternative direct proof of this in Section \ref{altproof}. Note we only consider $\alpha \geq p/(p+1)$ since $\beta$ can be chosen to equal 1 for $\alpha = p/(p+1)$ and so there is no need to consider weaker conditions on $\alpha$. 

\begin{cor} \label{sharpexamples}
For $p>0$ and $\alpha \in [\frac{p}{p+1}, p) \cap (0,1]$, there exists an $(\alpha, \frac{p\alpha}{p-\alpha})$-H\"{o}lder homeomorphism between $(0,1)$ and $\spp$.
\end{cor}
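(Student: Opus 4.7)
The plan is to derive this directly from Theorem \ref{gt} by specialising the parameter $t$ in the explicit family $g_t(x) = x^{tp}\exp(i/x^t)$ so that the Hölder exponent pair $\bigl(\frac{tp}{t+1}\wedge 1,\ tp \vee 1\bigr)$ produced by Theorem \ref{gt} coincides with the target pair $\bigl(\alpha,\ \frac{p\alpha}{p-\alpha}\bigr)$. In other words, the corollary is essentially a parameter-matching exercise on top of Theorem \ref{gt}.

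The natural choice, obtained by solving $\frac{tp}{t+1} = \alpha$ for $t$, is $t = \frac{\alpha}{p-\alpha}$. This is well-defined and strictly positive because the hypothesis $\alpha \in [\frac{p}{p+1}, p)$ forces $0 < \alpha < p$. A one-line substitution then gives $tp = \frac{p\alpha}{p-\alpha}$, so the forward and inverse Hölder exponents produced by Theorem \ref{gt} are already the right expressions in $p$ and $\alpha$ before the minima and maxima are applied.

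It then remains to check that the $\wedge 1$ and $\vee 1$ in Theorem \ref{gt} collapse in the desired way. The condition $\alpha \leq 1$ (built into $\alpha \in (0,1]$) immediately gives $\frac{tp}{t+1}\wedge 1 = \alpha$. The condition $\alpha \geq \frac{p}{p+1}$ is precisely equivalent to $\frac{p\alpha}{p-\alpha} \geq 1$, so it guarantees $tp \vee 1 = tp = \frac{p\alpha}{p-\alpha}$. Feeding these back into Theorem \ref{gt} shows that $g_t$, with this specific $t$, is an $(\alpha, \frac{p\alpha}{p-\alpha})$-Hölder homeomorphism between $(0,1)$ and $\spp$, which is exactly the claim.

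There is essentially no obstacle beyond this algebraic matching, since all of the analytic content has been absorbed into Theorem \ref{gt}. The only point worth emphasising is why the two endpoint conditions on $\alpha$ appear: the upper constraint $\alpha<p$ comes from Theorem \ref{sharpbounds1} and ensures $t>0$, while the lower constraint $\alpha \geq \frac{p}{p+1}$ is exactly what is needed to keep $\beta = \frac{p\alpha}{p-\alpha} \geq 1$, consistent with the definition of a bi-Hölder homeomorphism. For smaller $\alpha$ one could simply take $\beta = 1$, which is why, as remarked before the corollary, such values are not worth considering separately.
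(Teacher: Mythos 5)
Your proposal is correct and is exactly the paper's argument: the paper likewise takes $t=\frac{\alpha}{p-\alpha}$ and applies Theorem \ref{gt}, with your checks that $\frac{tp}{t+1}=\alpha\leq 1$ and $tp=\frac{p\alpha}{p-\alpha}\geq 1$ (the latter equivalent to $\alpha\geq\frac{p}{p+1}$) being the same parameter-matching the paper leaves implicit.
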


\begin{proof}
Fix $p>0$ and $\alpha \in [\frac{p}{p+1}, p) \cap (0,1]$,  consider the map $g_t$ with
\[
t = \frac{\alpha}{p-\alpha},
\]
and apply Theorem \ref{gt}.
\end{proof}

\begin{figure}[H]
  \centering
  \includegraphics[width= 0.8\textwidth]{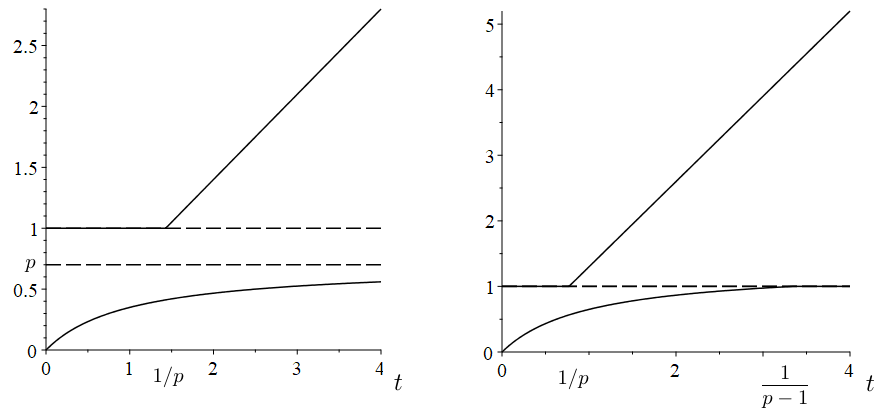}
\caption{Plots of the sharp H\"{o}lder exponents $\alpha, \beta$ for the map $g_t$ as functions of $t$ (solid lines).  Plots of $1$ and $\min(1,p)$ are shown as dashed lines for reference.  On the left $p= 0.7$ and on the right $p= 1.3$. The optimal $\beta$ is achieved for $t \leq 1/p$. Whereas, if $p \leq 1$, then  the optimal $\alpha$ is only obtained asymptotically as $t \to \infty$ and, if $p>1$, then the optimal $\alpha$ is obtained for $t \geq 1/(p-1)$.}
\label{fig:gt}
\end{figure}

We remark that the sharp relationship between $\alpha$ and $\beta$ given in Theorem \ref{sharpbounds2} and Corollary \ref{sharpexamples} resembles that of \emph{Sobolev conjugates}.  In particular, for $1 \leq p<d$, the \emph{Sobolev  embedding theorem} states that
\[
W^{1,p}(\mathbb{R}^d) \subset L^{q}(\mathbb{R}^d)
\]
for
\[
q = \frac{dp}{d-p},
\]
that is, $q$ is the Sobolev conjugate of $p$. Here  $W^{1,p}(\mathbb{R}^d) $ is the \emph{Sobolev space} consisting of real-valued functions $f$ on $\mathbb{R}^d$ such that both $f$ and all weak derivatives of $f$  are in $L^{p}(\mathbb{R}^d)$.

\section{A natural family of examples: proof of Theorem \ref{gt}} \label{smooth}

We first show that $g_t$ is $\alpha$-H\"{o}lder, for 
\[
\alpha = \frac{tp}{t+1} \wedge 1.
\] 
 Let $0<x<y<1$ and let $y^*\in (0,y)$ be the largest value which satisfies $\arg(g_t(y^*)) = \arg(g_t(y))+\pi$.  In order to prove that $g_t$ is $\alpha$-H\"{o}lder,   it suffices to show
\[
\frac{|g_t(x)-g_t(y)|}{|x-y|^{\alpha}}  \lesssim 1.
\]
If $x< y^*$, then both $|x-y|>|y^*-y|$ and $|g_t(x)-g_t(y)|< |g_t(y^*)-g_t(y)|$ and hence it suffices to bound
\begin{equation} \label{trigest}
\sup_{y^*<x<y}\frac{|g_t(x)-g_t(y)|}{|x-y|^{\alpha}}  = \sup_{y^*<x<y}\frac{\sqrt{x^{2tp}+y^{2tp}-2(xy)^{tp}\cos(x^{-t}-y^{-t} ) }}{(y-x)^{\alpha}}  
\end{equation}
from above by a constant independent of $y$.  Applying the truncated Taylor series bound $\cos z \geq 1- z^2/2$ to the function inside the square root in \eqref{trigest}, we get 
\[
x^{2tp}+y^{2tp}-2(xy)^{tp}\cos(x^{-t}-y^{-t} ) \leq (y^{tp}-x^{tp})^2+(y^{t}-x^{t})^2(xy)^{t(p-2)}
\]
and therefore applying the inequality $\sqrt{a+b} \leq \sqrt{a}+\sqrt{b}$ for $a,b \geq 0$ in \eqref{trigest}  this gives
\begin{eqnarray*}
\sup_{y^*<x<y}\frac{|g_t(x)-g_t(y)|}{|x-y|^{\alpha}} &\leq&  \sup_{y^*<x<y}\frac{(y^{tp}-x^{tp})+(y^{t}-x^{t})(xy)^{t(p/2-1)}}{(y-x)^{\alpha}}.
\end{eqnarray*}
 Considering only the first term, we have 
\[
\sup_{y^*<x<y}\frac{y^{tp}-x^{tp}}{(y-x)^{\alpha}} \lesssim \sup_{y^*<x<y} (y-x)^{tp \wedge 1 - \alpha} \leq 1.
\]
For the remaining term, fix $x \in (y^*,y)$ and  let $\omega = \omega(x,y) \in (0,\infty)$ be such that
\[
\arg(g_t(x)) = \arg(g_t(y))+\pi y^\omega.
\]
Directly from the definition of $\omega$ we have
\[
\frac{y^{t}-x^{t}}{(xy)^{t}} =  \pi y^\omega.
\]
Moreover, this yields
\[
y-x = y- \frac{y}{(1+\pi y^{\omega+t})^{1/t}} \ \approx \  y(1+\pi y^{t+\omega})^{1/t}-y \ \approx  \ y^{1+\omega+t}
\]
by Taylor's Theorem, and also $x \approx y$.  Therefore we have
\[
\frac{(y^{t}-x^{t})(xy)^{t(p/2-1)}}{(y-x)^{\alpha}} \approx  y^\omega \frac{y^{tp}}{y^{(1+t+\omega)\alpha}} \leq 1
\]
since
\[
\alpha = \frac{tp}{t+1} \wedge 1 \leq \frac{tp+\omega}{t+1+\omega} 
\]
for all $\omega>0$.  Specifically, if $\alpha < 1$, then the right hand side is increasing in $\omega$ and so minimised at $\omega=0$, and if $\alpha =1$, then the right hand side is uniformly bounded below by 1.  This proves that $g_t$ is $\alpha$-H\"{o}lder.

It remains to show $\alpha = tp/(t+1) \wedge 1$  is the sharp H\"{o}lder exponent, that is, $g_t$ is not $\alpha'$-H\"{o}lder for $\alpha' \in (\alpha, 1]$. Here we may assume that $\alpha = tp/(t+1) < 1$, since otherwise there is nothing to prove.   To this end, let $y \in (0,1)$ and choose $x =y^*$, and note that  
\begin{equation} \label{trigest2}
\frac{|g_t(x)-g_t(y)|}{|x-y|^{\alpha'}}  = \frac{x^{tp}+y^{tp} }{(y-x)^{\alpha'}}.
\end{equation}
Observe that, as above, $x \approx y$ and $y-x \approx y^{1+t}$, noting that $\omega=\omega(y^*,y)=0$.  Therefore 
\begin{eqnarray*}
\frac{|g_t(x)-g_t(y)|}{|x-y|^{\alpha'}} &\gtrsim&  y^{tp-(1+t)\alpha'} \to \infty
\end{eqnarray*}
as $y \to 0$ if
\[
\alpha'>\frac{tp}{t+1}
\]
proving the result.

Next we show that $g_t^{-1}$ is $\beta$-H\"{o}lder, for  $\beta  = tp \vee  1$.  It suffices to show that 
\[
\frac{|g_t(x)-g_t(y)|}{|x-y|^{\beta}} \gtrsim  1
\]
with implicit constants independent of $x$ and $y$. Fix $0<x<y<1$ and,  for $m \geq 1$, let $y_m \in (0,y)$ be the $m$th largest number satisfying 
\[
\arg (g_t(y_m)) = \arg (g_t(y)).
\]
If $x \in [y_{m+1}, y_{m})$ for $m \geq 1$, then $|x-y| \leq |y_{m+1}-y|$ and
\[
|g_t(x)-g_t(y)| \geq |g_t(y_{m})-g_t(y)|  \gtrsim  |g_t(y_{m+1})-g_t(y)|
\]
with implicit constant independent of $m$.  In particular,
\[
\frac{|g_t(x)-g_t(y)|}{|x-y|^{\beta}}  \gtrsim \frac{ |g_t(y_{m+1})-g_t(y)|}{|y_{m+1}-y|^{\beta}} .
\]
Recall the winding intervals $\mathcal{I}^k = g_t^{-1}(\spp^k) \subset (0,1)$, now defined for $g_t$,   see \eqref{fullturn}-\eqref{turninginterval}.   In particular, $\mathcal{I}^k=[a_{k+1}-a_k)$ where
\[
\arg(g_t(a_k))=a_k^{-t} = 1+ 2 \pi (k -1)
\]
and therefore
\[
|\mathcal{I}^k| \approx (k-1)^{-1/t} - k^{-1/t} \approx k^{-1/t-1}.
\]
If $y \in \mathcal{I}^l$ for some $l \geq 1$, then $y_{m+1} \in \mathcal{I}^{l+m+1}$ and so 
\begin{eqnarray*}
|y_{m+1}-y|^\beta \lesssim \left(\sum_{k=l}^{l+m+1} |\mathcal{I}^k|\right)^\beta & \approx & \left(\sum_{k=l}^{l+m+1} k^{-1/t-1}\right)^\beta \\ \\
 &\approx &\left( l^{-1/t} - (l+m+1)^{-1/t} \right)^\beta \\ \\
&\leq  &   l^{-p} - (l+m+1)^{-p}
\end{eqnarray*}
and
\[
|g_t(y_{m+1})-g_t(y)| \approx   \sum_{k=l}^{l+m+1} |g_t(y_{k+1}) - g_t(y_{k})  |\approx \sum_{k=l}^{l+m+1} k^{-p-1} \approx l^{-p} - (l+m+1)^{-p}.
\]
Therefore
\[
\frac{|g_t(x)-g_t(y)|}{|x-y|^{\beta}}  \gtrsim 1.
\]
Finally, suppose $x \in (y_1,y)$.  If $x \in [y^*,y)$, where, as above, $y^*\in (0,y)$ is the largest value which satisfies $\arg(g_t(y^*)) = \arg(g_t(y))+\pi$, then
\[
|g_t(x)-g_t(y)| \gtrsim |x-y|.
\]
 If $x \in [y_1,y^*)$,  and $y \in \mathcal{I}^l$ for some $l \geq 1$, then
 \[
\frac{|g_t(x)-g_t(y)|}{|x-y|^{\beta}}  \gtrsim \frac{|g_t(y_1)-g_t(y)|}{|y_1-y|^{\beta}} \gtrsim \frac{l^{-p-1}}{l^{-(1/t+1)\beta}} \geq 1,
\]
since $\beta \geq t(p+1)/(t+1)$. This completes the proof that $g_t^{-1}$ is $\beta$-H\"{o}lder.

The fact that $\beta$ is the sharp H\"{o}lder exponent for $g_t^{-1}$ follows from Theorem \ref{sharpbounds2} since $\alpha$ and $\beta$ are ``winding conjugates'', that is, that is they satisfy
\[
\beta = \frac{p \alpha}{p-\alpha}.
\]
The proof of Theorem \ref{gt} is complete. \hfill \qed

\section{H\"{o}lder estimates from dimension theory} \label{dimensions}

If $g: X \to Y$ is an onto  $\alpha$-H\"{o}lder  map, then
\begin{equation} \label{holdd}
\dim Y \leq \frac{\dim X}{\alpha}
\end{equation}
where $\dim$ is the Hausdorff, packing, upper or lower box dimension, see \cite{falconer}.  Moreover, if $g$ is Lipschitz, then
\[
\mathcal{H}^1(Y) \lesssim \mathcal{H}^1(X)
\]
where $\mathcal{H}^1$ is the 1-dimensional Hausdorff measure.  These estimates, and the analogous formulations for $g^{-1}$, give rise to bounds on $\alpha$ and $\beta$ in the H\"{o}lder winding problem.  We consider these estimates in this section, ultimately proving that they are not sharp.

We refer the reader to \cite{falconer,robinson,Spectraa} for more background on dimension theory, including definitions and basic properties of the various dimensions.  We recall the definitions of the box dimension and the Assouad spectrum here, which are the definitions we  use directly.  We omit the definition of $\mathcal{H}^1$  since the only properties we need are that it is a measure and that the $\mathcal{H}^1$ measure of the boundary of a circle is comparable to its radius.  

Let $F \subseteq \mathbb{R}^d$ be a non-empty bounded set.  The \emph{lower} and \emph{upper box dimensions} of $F$ are defined by
\[
\underline{\dim}_\text{B} F = \liminf_{ r \to 0} \, \frac{\log N_r (F)}{-\log  r}
\qquad
\text{and}
\qquad
\overline{\dim}_\text{B} F = \limsup_{ r  \to 0} \,  \frac{\log N_r (F)}{-\log  r},
\]
respectively, where $N_r (F)$ is the smallest number of sets required for an $r$-cover of $F$.  If $\underline{\dim}_\text{B} F = \overline{\dim}_\text{B} F$, then we call the common value the \emph{box dimension} of $F$ and denote it by $\dim_\text{B} F$. 

 The \emph{Assouad spectrum} of $F$ is defined as the function $\theta \mapsto \as F$ where
\begin{eqnarray*}
\as F \ = \    \inf \Bigg\{ \  \alpha &:& \text{     there exists $C >0$ such that, for all $0<r<1$ and $x \in F$,} \\
&\,&\hspace{45mm}  \text{$ N_{r} \big( B(x,r^\theta) \cap F \big) \ \leq \ C \bigg(\frac{r^\theta}{r}\bigg)^\alpha$ } \Bigg\}.
\end{eqnarray*}
and  $\theta \in (0,1)$.  This notion was introduced in \cite{Spectraa} and is similar in spirit to the Assouad dimension.  The key difference is that the Assouad dimension considers all pairs of scales $r<R$, whereas here the parameter $\theta$ serves to fix the relationship between the big scale $R=r^\theta$ and the small scale $r$.  The result is that the Assouad spectrum captures more precise information about the set, and has the benefit of being easier to work with and better behaved (see applications below).  It also continuously  interpolates between the upper box and (quasi-)Assouad dimension in a meaningful way.  The \emph{quasi-Assouad dimension}  is another related notion which can be defined by
\[
\qad F = \lim_{\theta\to 1} \as F.
\]
Note that this is not the original definition of the quasi-Assouad dimension, see \cite{quasi}, but this formula (and the fact the limit exists) was established  in \cite{canadian}.  Also, the Assouad dimension, $\ad$, which we will not use directly, satisfies $\qad F \leq \ad F \leq d$.  Moreover, it was proved in \cite{Spectraa} that
\begin{equation} \label{specbounds}
\ubd F \leq \as F \leq \frac{\ubd F}{1-\theta} \wedge \qad F
\end{equation}
and that $\as F$ is continuous in $\theta$. 

We turn our attention now to the dimensions of spirals and the resulting applications to the winding problem.  First, we note that the Hausdorff and packing  dimensions of $\spp$ are 1 for all $p>0$ and so no information can be gleaned from these dimensions since the dimensions of $(0,1)$ are also 1.  One can get some weak information by considering the length (1-dimensional Hausdorff measure) of $\spp$ via the following simple result.

\begin{thm} \label{length}
If $p>1$ then $0<\mathcal{H}^1(\spp) < \infty$, and if $p\leq 1$ then $\mathcal{H}^1(\spp) = \infty$. Therefore, if $p \leq 1$, then there cannot exist an onto Lipschitz map $f: (0,1) \to \spp$.
\end{thm}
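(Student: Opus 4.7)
The plan is to exploit the smooth parametrization $\gamma:[1,\infty)\to\spp$ given by $\gamma(x)=x^{-p}e^{ix}$, so that $\spp$ is a rectifiable curve and $\mathcal{H}^1(\spp)$ equals the arc length $\int_1^\infty |\gamma'(x)|\,dx$. A direct calculation yields
\[
|\gamma'(x)|=\sqrt{p^2 x^{-2p-2}+x^{-2p}},
\]
so $|\gamma'(x)|\asymp x^{-p}$ for all $x\geq 1$, with both an upper and lower bound of this form (the lower bound holds because the tangential component alone has modulus $x^{-p}$, and the upper bound follows from the triangle inequality).

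Given this two-sided estimate, the upper bound half of the theorem is immediate: for $p>1$,
\[
\mathcal{H}^1(\spp)=\int_1^\infty |\gamma'(x)|\,dx \lesssim \int_1^\infty x^{-p}\,dx<\infty.
\]
For the lower bound, the cleanest route is to use the full-turn decomposition $\spp=\bigcup_{k\geq 1}\spp^k$ from \eqref{fullturn}. Since $\mathcal{H}^1$ is a measure and the $\spp^k$ are essentially disjoint, I would estimate
\[
\mathcal{H}^1(\spp)=\sum_{k\geq 1} \mathcal{H}^1(\spp^k)=\sum_{k\geq 1}\int_{1+2\pi(k-1)}^{1+2\pi k}|\gamma'(x)|\,dx \gtrsim \sum_{k\geq 1} k^{-p},
\]
which is finite precisely when $p>1$. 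In particular, $\mathcal{H}^1(\spp)=\infty$ whenever $p\leq 1$, and conversely $\mathcal{H}^1(\spp)$ is strictly positive for $p>1$ since the parametrization is non-degenerate.

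For the Lipschitz consequence, I would invoke the standard fact that Lipschitz maps do not increase $1$-dimensional Hausdorff measure: if $f:(0,1)\to\spp$ were Lipschitz and onto with constant $C$, then
\[
\mathcal{H}^1(\spp)=\mathcal{H}^1(f((0,1)))\leq C\,\mathcal{H}^1((0,1))=C<\infty,
\]
contradicting $\mathcal{H}^1(\spp)=\infty$ when $p\leq 1$.

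The only mildly delicate step is the lower bound $\mathcal{H}^1(\spp^k)\gtrsim k^{-p}$; while the arc-length integral handles this cleanly, an alternative is to observe that $\spp^k$ is a connected curve encircling the origin, so its length is bounded below by the circumference of any circle it must cross, which is comparable to $k^{-p}$. Either approach should go through without difficulty.
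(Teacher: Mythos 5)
Your proof is correct and follows essentially the same route as the paper: decompose $\spp$ into the full turns $\spp^k$, note $\mathcal{H}^1(\spp^k)\approx k^{-p}$, sum the series, and use that Lipschitz maps do not increase $\mathcal{H}^1$. The only difference is cosmetic — you justify the comparability $\mathcal{H}^1(\spp^k)\approx k^{-p}$ via the arc-length integral $\int|\gamma'(x)|\,dx$ with $|\gamma'(x)|\asymp x^{-p}$, where the paper simply asserts it.
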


\begin{proof}
 Clearly
\[
\mathcal{H}^1(\spp^k) \approx k^{-p}
\]
and so
\[
\mathcal{H}^1(\spp) = \sum_{k \geq 1} \mathcal{H}^1(\spp^k)   \approx  \sum_{k \geq 1} k^{-p}
\]
from which the result follows.
\end{proof}

Next we consider the box dimensions of $\spp$.  These are strictly greater  than 1 for $p \in (0,1)$, which therefore improves on the information contained in the previous theorem concerning the winding problem. The following result can be found in \cite{tricot,vasshunt}, but we include our own proof since it informs the strategy in the more complicated setting of the Assouad spectrum which follows.  See also \cite{zub} for a treatment of the box dimensions of spirals in $\mathbb{R}^3$.

\begin{thm} \label{boxdim}
For all $p>0$
\[
\bd \spp =  \frac{2}{1+p} \vee 1 .
\]
\end{thm}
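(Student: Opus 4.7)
My plan is to compute $\bd \spp$ by a direct covering argument exploiting the decomposition $\spp = \bigcup_k \spp^k$ into full turns, where $\spp^k$ lies in an annulus of inner and outer radii $\approx k^{-p}$ and $\approx (k-1)^{-p}$, has arclength $\approx k^{-p}$, and whose radial distance from the neighbouring turn $\spp^{k+1}$ is $\approx k^{-p-1}$. The case $p \geq 1$ falls out immediately from Theorem \ref{length}: since $\mathcal{H}^1(\spp) < \infty$, the spiral is a rectifiable curve of finite length $L$ and can therefore be covered by $\lesssim 1 + L/r$ balls of radius $r$, giving $\ubd \spp \leq 1$; the lower bound $\underline{\dim}_{\mathrm{B}} \spp \geq 1$ holds because $\spp$ contains non-degenerate arcs.

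For $0 < p < 1$, I will introduce the critical turn index $k^\ast \approx r^{-1/(p+1)}$, defined as the value where the radial gap $k^{-p-1}$ between consecutive turns becomes comparable to $r$. This naturally splits the spiral into \emph{outer turns} $\bigcup_{k \leq k^\ast} \spp^k$, where consecutive turns are separated by more than $r$, and \emph{inner turns} $\bigcup_{k > k^\ast} \spp^k$, which are contained in the disc of radius $\approx (k^\ast)^{-p}$ about the origin.

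For the upper bound, each outer turn $\spp^k$, being a curve of length $\approx k^{-p}$, is covered by $\lesssim 1 + k^{-p}/r$ balls of radius $r$, and
\[
\sum_{k=1}^{k^\ast} \left( 1 + \frac{k^{-p}}{r} \right) \approx k^\ast + \frac{(k^\ast)^{1-p}}{r} \approx r^{-2/(p+1)}.
\]
The inner turns lie inside a disc of radius $\approx (k^\ast)^{-p} \approx r^{p/(p+1)}$, which itself is covered by $\lesssim r^{2p/(p+1)}/r^2 = r^{-2/(p+1)}$ balls of radius $r$. Adding these gives $N_r(\spp) \lesssim r^{-2/(p+1)}$, so $\ubd \spp \leq 2/(p+1)$.

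For the lower bound, note that for every $k \leq k^\ast$ the turn $\spp^k$ is at distance $\gtrsim r$ from $\spp \setminus \spp^k$, so any $r$-cover of $\spp$ must spend a separate batch of balls to cover each such $\spp^k$; since $\spp^k$ has diameter $\approx k^{-p}$, this requires $\gtrsim k^{-p}/r$ balls, and summing over $k \leq k^\ast$ gives $N_r(\spp) \gtrsim r^{-2/(p+1)}$. The main technical care lies in pinning down the geometric separation between consecutive turns and in identifying the critical scale $k^\ast$; once these are established, the outer-turn arclength sum and the inner-turn area estimate happen to yield the same exponent $-2/(p+1)$, and the matching lower bound drops out from the separation of outer turns.
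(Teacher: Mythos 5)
Your decomposition at the critical index $k^\ast \approx r^{-1/(p+1)}$, separating outer turns (still $r$-separated) from inner turns (swallowed by a disc of radius $\approx (k^\ast)^{-p}$), is exactly the device the paper uses, and for $0<p<1$ and for $p>1$ your estimates are essentially sound; the rectifiability shortcut via Theorem \ref{length} for $p>1$ is a legitimate alternative to the paper's direct count $N_r(\spp)\approx r^{-1}$. The genuine gap is the case $p=1$. You assert that for $p\geq 1$ Theorem \ref{length} gives $\mathcal{H}^1(\spp)<\infty$, but that theorem says the opposite at $p=1$: the hyperbolic spiral has infinite length, so the ``curve of finite length $L$ is covered by $\lesssim 1+L/r$ balls'' argument is unavailable there, and your $0<p<1$ computation also breaks at $p=1$ because $\sum_{k\le k^\ast}k^{-p}\approx (k^\ast)^{1-p}$ must be replaced by the harmonic sum $\approx \log k^\ast$. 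The case $p=1$ needs a separate (easy) argument: running your outer/inner count gives $N_r(\spp)\lesssim r^{-1}\bigl(1+|\log r|\bigr)$, and the logarithm does not affect the exponent, so the upper box dimension is still $1$; this is precisely how the paper handles it.

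A smaller repair is needed in your lower bound for $p<1$: the claim that each outer turn $\spp^k$ is at distance $\gtrsim r$ from $\spp\setminus\spp^k$ is literally false, since $\spp^k$ abuts the adjacent turns $\spp^{k-1}$ and $\spp^{k+1}$ at their common endpoints. What is true, and what your argument actually needs, is that any ball of radius $r$ can meet only a bounded number of the turns $\spp^k$ with $k\le k^\ast$, because consecutive turns are radially separated by $\approx k^{-p-1}\gtrsim r$ in this range; combined with the fact that a connected set of diameter $d$ requires $\gtrsim d/r$ balls of radius $r$, this yields $N_r(\spp)\gtrsim \sum_{k\le k^\ast}k^{-p}/r \approx r^{-2/(p+1)}$ as you intended. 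With these two repairs your proof coincides in substance with the paper's, whose lower bound likewise comes from the $r$-separated outer turns (the paper additionally notes that the inner turns are $r$-dense in the disc $B\bigl(0,(k^\ast)^{-p}\bigr)$, which gives the same exponent).
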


\begin{proof}
Let $r \in (0,1)$ and  $k(r)$  be the unique positive integer satisfying
\[
k(r)^{-(p+1)} \leq r < (k(r)-1)^{-(p+1)},
\]
noting that $k(r) \approx r^{-1/(p+1)}$.  The importance of this parameter is that, decomposing $\spp$ as the disjoint union of two sets
\[
 \left(\bigcup_{k>k(r)} \spp^k \right) \cup   \left( \bigcup_{k \leq k(r)} \spp^k \right),
\]
we see that $B(0, k(r)^{-p})$ is contained in the $\delta$-neighbourhood of the first set for some $\delta \approx r$ since this portion of the spiral is wound `tighter' than $ \approx r$.  However, a given $r$-ball may only cover part of second set with length $\lesssim r$, since the turns in the spiral are still `$r$-separated' at this point.   It follows that
\begin{eqnarray*}
N_r\left( \spp \right)  &\approx  & N_r\left(\spp \cap B(0, k(r)^{-p})\right) + \sum_{k=1}^{k(r)}  N_r\left( \spp^k \right) \\ \\
&\approx  &   \left(\frac{k(r)^{-p}}{r} \right)^2+\,  \sum_{k=1}^{k(r)}  \frac{k^{-p}}{r}  \\ \\
&\approx  &   r^{-\frac{2}{1+p}}+  r^{-1} \sum_{k=1}^{k(r)}  k^{-p} .
\end{eqnarray*}
Therefore, if $p>1$, we get
\[
N_r\left( \spp \right)   \approx r^{-1},
\]
if $p=1$, we get
\[
N_r\left( \spp \right)   \approx r^{-1}+ \log k(r) \approx r^{-1}\left( 1+ | \log r | \right)
\]
and, if $p<1$, we get
\[
N_r\left( \spp \right)   \approx r^{-\frac{2}{1+p}}+  r^{-1} {k(r)}^{1-p}  \approx  r^{-\frac{2}{1+p}}.
\]
The result follows.
\end{proof}

Applying \eqref{holdd} for box dimension, we get the following corollary in the context of the winding problem.

\begin{cor}  \label{nonbounds2}
If $f : (0,1) \to \spp$ is an onto $\alpha$-H\"{o}lder map, then
\[
\alpha \leq  \frac{p+1}{2} \wedge 1.
\]
\end{cor}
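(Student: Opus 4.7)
The corollary is a direct consequence of two ingredients already in place: the Hölder distortion inequality \eqref{holdd} for the upper box dimension and the explicit computation $\bd \spp = \frac{2}{1+p} \vee 1$ from Theorem \ref{boxdim}. My plan is simply to assemble these.

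First I would observe that $(0,1)$ is a bounded interval and therefore $\bd (0,1) = 1$. Applying \eqref{holdd} to the onto $\alpha$-Hölder map $f:(0,1) \to \spp$, with $\dim$ taken to be the upper box dimension, gives
\[
\frac{2}{1+p} \vee 1 \;=\; \bd \spp \;\le\; \frac{\bd (0,1)}{\alpha} \;=\; \frac{1}{\alpha}.
\]
Rearranging, and using that inversion of positive numbers swaps $\vee$ and $\wedge$, yields
\[
\alpha \;\le\; \frac{1}{\tfrac{2}{1+p} \vee 1} \;=\; \frac{p+1}{2} \wedge 1,
\]
which is exactly the stated inequality.

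There is no real obstacle here: the computational heart of the matter — the determination of $\bd \spp$ — was already carried out in Theorem \ref{boxdim}, and the remaining manipulation is elementary. It is worth noting, in the context of the comparison programme outlined in the introduction, that this bound is strictly weaker than the sharp bound $\alpha < p$ from Theorem \ref{sharpbounds1} when $p < 1$ (since in that range $(p+1)/2 > p$), and is merely the trivial Hölder bound $\alpha \le 1$ when $p \ge 1$. This is consistent with the paper's thesis that the box dimension alone does not provide sharp information for the winding problem, motivating the finer analysis via the Assouad spectrum that follows.
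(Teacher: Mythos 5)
Your proposal is correct and is exactly the argument the paper intends: apply the Hölder distortion bound \eqref{holdd} for (upper) box dimension together with Theorem \ref{boxdim} and invert, noting $\bd (0,1)=1$. The paper states the corollary with precisely this one-line justification, so there is nothing to add.
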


It was proved in \cite[Theorem 7.2]{Spectraa} that for a large class of spirals $\mathcal{S}(\phi)$ (including the spirals $\spp$ which we study) that, if $\ubd \mathcal{S}(\phi) >1$, then the Assouad spectrum of $\mathcal{S}(\phi)$ is given by the general upper bound from \eqref{specbounds}, that is, 
\[
\as \mathcal{S}(\phi) = \frac{\ubd \mathcal{S}(\phi)}{1-\theta} \wedge 2.
\]
In particular, this result combined with Theorem \ref{boxdim} yields the Assouad spectrum of $\spp$ for $p<1$.  However, for $p \geq 1$, $\ubd \spp = 1$ and so the Assouad spectrum is not derivable from \cite{Spectraa}.  We compute it here and, surprisingly, it is \emph{not} given by the general upper bound from \eqref{specbounds} for $p>1$.

\begin{thm} \label{assspec}
For  $p \in (0,1)$ and $\theta \in (0,1)$, we have
\[
\as \spp =  \frac{2}{(1+p)(1-\theta)} \wedge 2
\] 
and, for  $p  \geq 1$ and $\theta \in (0,1)$, we have
\[
\as \spp =  1+ \frac{\theta}{p(1-\theta)} \wedge 2.
\]
In both cases, the Assouad spectrum has a single phase transition at $\theta = \frac{p}{1+p}$ and, if $p>1$, then the Assouad spectrum is strictly smaller than the upper bound from \eqref{specbounds} for $0<\theta<\frac{p}{1+p}$.
\end{thm}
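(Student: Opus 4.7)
The plan is to compute $\sup_{x \in \spp} N_r(\spp \cap B(x, r^\theta))$ and match it against $(r^\theta/r)^\alpha$ for the claimed exponent. Following the proof of Theorem \ref{boxdim}, the spiral splits at scale $r$ into a \emph{tight core} $\spp \cap B(0, r^{p/(p+1)})$, in which consecutive turns lie closer than $r$, and a \emph{separated tail} consisting of the turns $\spp^k$ with $k \le K_r := r^{-1/(p+1)}$ at radial spacing $\gtrsim r$. Writing $K_R := R^{-1/p} = r^{-\theta/p}$, the phase transition $\theta = p/(p+1)$ corresponds to $R = r^{p/(p+1)}$, equivalently to whether the window $B(x,R)$ extends beyond the tight core or lies inside it.

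\emph{Upper bound.} First reduce to a centred window. If $|x| < 2R$ then $B(x,R) \subset B(0,3R)$ directly; if $|x| \ge 2R$ then $B(x,R)$ misses the tight core and meets only the separated turns $\spp^k$ whose radius $k^{-p}$ lies within $R$ of $|x|$, each contributing an arc of length $\lesssim R$, and a short calculation shows the total is dominated by the centred count. Then apply the decomposition used for Theorem \ref{boxdim} to $B(0,3R)$:
\[
N_r(\spp \cap B(0,3R)) \;\lesssim\; \frac{\min(R, r^{p/(p+1)})^2}{r^2} \;+\; \frac{1}{r} \sum_{K_R \le k \le K_r} k^{-p}.
\]
Evaluating the sum in the three regimes $p<1$, $p=1$, $p>1$, and combining with the trivial bound $(R/r)^2$ when $\theta \ge p/(p+1)$, yields the upper bounds $\frac{2}{(p+1)(1-\theta)}$, $1 + \frac{\theta}{p(1-\theta)}$, or $2$ as required.

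\emph{Lower bound.} For $r$ sufficiently small pick $x_0 \in \spp$ with $|x_0| < R/2$, so $B(x_0, R) \supset B(0, R/2)$. In the sub-regime $\theta \ge p/(p+1)$, $B(0, R/2)$ lies inside the tight core, in which the spiral is $r$-dense: every point of $B(0, r^{p/(p+1)})$ is within the radial spacing $K_r^{-(p+1)} = r$ of $\spp$. An $r$-cover of $\spp \cap B(0, R/2)$ therefore enlarges to a $2r$-cover of $B(0, R/2)$, giving $N_r \gtrsim (R/r)^2$. For $\theta < p/(p+1)$ with $p < 1$, the entire tight core $B(0, r^{p/(p+1)}) \subset B(0, R/2)$ and the same density argument yields $N_r \gtrsim r^{-2/(p+1)}$; for $\theta < p/(p+1)$ with $p \ge 1$, place $\sim k^{-p}/r$ roughly evenly-spaced points on each separated turn $\spp^k$ for $K_R \le k \le c K_r$ (choosing $c$ small so consecutive turns are at radial separation $>2r$), obtaining $\gtrsim r^{-1+\theta(p-1)/p}$ pairwise $r$-separated points in $B(0, R/2)$.

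Matching these upper and lower bounds yields the formulas, and continuity at the phase transition $\theta = p/(p+1)$ is verified by substitution into either expression (both give $2$). The claimed strict inequality $\as \spp < \ubd \spp / (1-\theta) = 1/(1-\theta)$ for $p>1$ and $\theta < p/(p+1)$ reduces to $\theta/(p(1-\theta)) < \theta/(1-\theta)$, which is equivalent to $p > 1$. The principal obstacle will be the uniform reduction of the upper bound to the centred case, namely showing that the covering count cannot exceed its origin-centred value when $x$ lies on the outer turns; a secondary subtlety is verifying the spiral's $r$-density in the tight core, which is ultimately a consequence of the radial-spacing estimate $k^{-(p+1)} \le r$ for $k \ge K_r$.
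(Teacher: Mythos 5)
Your core argument is the same as the paper's: decompose the spiral at scale $r$ into the tight core $B(0,r^{p/(p+1)})$, which is essentially two-dimensional at scale $r$, and the $r$-separated outer turns $\spp^k$, then count $\sum_k N_r(\spp^k)\approx \sum_k k^{-p}/r$ over the turns meeting $B(0,r^\theta)$; the exponent $1+\frac{\theta}{p(1-\theta)}$ drops out exactly as in your computation. You differ in two places where the paper instead leans on citations: (i) for $p\in(0,1)$ the paper quotes \cite[Theorem 7.2]{Spectraa} (which gives $\as F = \frac{\ubd F}{1-\theta}\wedge 2$ for such spirals once $\ubd \spp = \frac{2}{1+p}$ is known from Theorem \ref{boxdim}), whereas you redo the covering argument directly --- more self-contained; (ii) for $\theta \ge \frac{p}{1+p}$ the paper gets the value $2$ from continuity of the spectrum together with \cite[Corollary 3.6]{Spectraa}, whereas you give a direct lower bound from the $r$-density of the core --- also valid, modulo the small point that the nearest spiral point to a point of $B(0,R/2)$ may lie only in $B(0,R/2+r)$, which costs a constant. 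The step you rightly flag as the principal obstacle --- that off-centre windows $B(x,r^\theta)$ never beat the centred one --- is precisely the step the paper does not prove either but delegates to \cite{Spectraa}; your sketch is the right calculation (a ball at distance $\ge 2R$ from the origin, centred at radius $\approx m^{-p}$, meets $\lesssim Rm^{p+1}\lesssim R^{-1/p}$ turns, each in an arc of length $\lesssim R$, giving $\lesssim R^{1-1/p}/r$, which matches the centred count) and should be written out rather than asserted. Finally, for the strict-inequality claim note that the bound in \eqref{specbounds} is $\frac{1}{1-\theta}\wedge 2$, so for $\theta\in(\tfrac12,\tfrac{p}{1+p})$ you must also check $1+\frac{\theta}{p(1-\theta)}<2$, which is again equivalent to $\theta<\frac{p}{1+p}$.
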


\begin{proof}
The  $p \in (0,1)$ case follows from Theorem \ref{boxdim} and \cite[Theorem 7.2]{Spectraa}, and therefore we assume $p \geq 1$.  It suffices to prove the result for $0<\theta <\frac{p}{1+p}$, since for $\theta' = \frac{p}{1+p}$ it follows by continuity of the Assouad spectrum that $\dim_\textup{A}^{\theta'} \spp = 2$ and therefore by \cite[Corollary 3.6]{Spectraa} $\as \spp = 2$ for all $\theta> \frac{p}{1+p}$ as required.  We prove the upper and lower bound separately, starting with the lower bound.

Let $r \in (0,1)$ and  $l(r)$, $L(r)$  be the unique positive integers satisfying
\[
L(r)^{-(p+1)} \leq r < (L(r)-1)^{-(p+1)}.
\]
and
\[
l(r)^{-p} \leq r^\theta < (l(r)-1)^{-p},
\]
respectively. Note that
\[
L(r) \approx r^{-\frac{1}{p+1}} \qquad \text{and} \qquad l(r) \approx r^{-\frac{\theta}{p}}
\]
and so $L(r) > l(r)$ for all sufficiently small $r$ since we assume $\theta < \frac{p}{p+1}$. Arguing as in the proof of Theorem \ref{boxdim}, we have
\begin{eqnarray*}
N_{r}\left( B(0,r^\theta) \cap  \spp \right)  &\gtrsim  & \sum_{k=l(r)}^{L(r)}  N_{r}\left( \spp^k \right) \\ \\
&\approx  &    \sum_{k=l(r)}^{L(r)}  \frac{k^{-p}}{r}.
\end{eqnarray*}
Therefore, if $p>1$ we get
\[
N_{r}\left( B(0,r^\theta) \cap  \spp \right)  \ \gtrsim \   r^{-1} \left( l(r)^{1-p}-L(r)^{1-p}\right)  \ \approx \  r^{-1-\frac{\theta(1-p)}{p} } \ = \    \left( \frac{r^\theta}{r} \right)^{1+ \frac{\theta}{p(1-\theta)} }
\]
and if $p=1$ we get
\[
N_{r}\left( B(0,r^\theta) \cap  \spp \right)  \ \gtrsim \   r^{-1} | \log r|   \ = \    \left( \frac{r^\theta}{r} \right)^{\frac{1}{1-\theta} } | \log r| 
\]
and in both cases we get the desired lower bound.

To prove the upper bound, we may assume that $p>1$ since the upper bound follows from \eqref{specbounds} in the $p=1$ case.   Let $r \in (0,1)$ and  $l(r)$, $L(r)$ be as before.  It is easy to see that
\[
N_{r}\left( B(z,r^\theta) \cap  \spp \right) \ \lesssim \ N_{r}\left( B(0,r^\theta) \cap  \spp \right)
\]
for all $z \in \spp$ and so it suffices to only consider $z=0$. See \cite{Spectraa} for a detailed explanation of this reduction in a more general context. Once again arguing as in the proof of Theorem \ref{boxdim}, we have
\begin{eqnarray*}
N_{r}\left( B(0,r^\theta) \cap  \spp \right)  &\lesssim  & N_{r}\left( B(0,L(r)^{-p}) \cap  \spp \right) \ + \ \sum_{k=l(r)}^{L(r)}  N_{r}\left( \spp^k \right) \\ \\
&\approx  &  \left(\frac{L(r)^{-p}}{r} \right)^2 \ + \  \sum_{k=l(r)}^{L(r)}  \frac{k^{-p}}{r} \\ \\
&\lesssim  &  \left(\frac{r^{\frac{p}{p+1}}}{r} \right)^2   \ + \  r^{-1}  l(r)^{1-p}   \\ \\
&\approx  &  \left( \frac{r^\theta}{r} \right)^{\frac{2}{(1-\theta)(p+1)} }  \ + \  \left( \frac{r^\theta}{r} \right)^{1+ \frac{\theta}{p(1-\theta)} }
\end{eqnarray*}
which proves that
\[
\as \spp  \  \leq \  \frac{2}{(1-\theta)(p+1)}  \vee  \left(1+ \frac{\theta}{p(1-\theta)}\right) \  =  \ 1+ \frac{\theta}{p(1-\theta)}
\]
completing the proof.
\end{proof}

\begin{figure}[H]
  \centering
  \includegraphics[width= \textwidth]{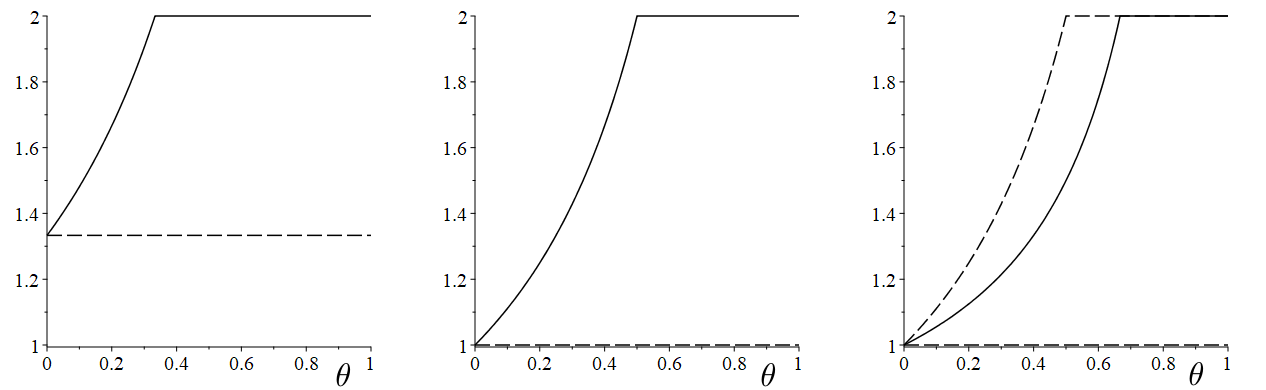}
\caption{Plots of $\as \spp$ (solid) along with the general upper and lower bounds from \eqref{specbounds} (dashed) for comparison. On the left, $p=1/2$, in the centre $p=1$, and on the right $p=2$.  }
\label{fig1}
\end{figure}

The following Corollary follows immediately from Theorem \ref{assspec}.  It was proved in the range  $p\in (0,1)$ in \cite{Spectraa}. 
\begin{cor} \label{assdim}
For all $p>0$,  $\ad \spp = \qad \spp = 2$. 
\end{cor}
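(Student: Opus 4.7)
The plan is to extract both equalities directly from Theorem \ref{assspec} together with the general inequality $\qad F \leq \ad F \leq d$ that was recorded earlier in the excerpt (here $d=2$ since $\spp \subset \mathbb{C}$).

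First I would observe that Theorem \ref{assspec} shows, in both the $p \in (0,1)$ and $p \geq 1$ cases, that the Assouad spectrum saturates at the ambient dimension: for every $\theta \geq \frac{p}{1+p}$ one has $\as \spp = 2$. In particular, the limit
\[
\qad \spp = \lim_{\theta \to 1} \as \spp = 2
\]
is trivial, since the function is eventually constant equal to $2$.

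Next, combining this with the sandwich $\qad F \leq \ad F \leq d$ recalled in the paragraph preceding \eqref{specbounds} (applied with $d=2$), we conclude $\ad \spp = 2$ as well, which finishes the proof.

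There is no real obstacle here: the work has already been done in computing the Assouad spectrum. The corollary is essentially just reading off the value at (and past) the phase transition $\theta = \frac{p}{1+p}$ and invoking the general bound $\ad \spp \leq 2$. The only thing worth emphasizing is that the formula $\qad F = \lim_{\theta \to 1} \as F$ from the earlier discussion is exactly the bridge that turns Theorem \ref{assspec} into a statement about the quasi-Assouad and Assouad dimensions.
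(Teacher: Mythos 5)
Your argument is correct and is exactly how the paper intends the corollary to be read: the spectrum from Theorem \ref{assspec} equals $2$ for all $\theta \geq \frac{p}{1+p}$, so $\qad \spp = \lim_{\theta\to 1}\as \spp = 2$, and the sandwich $\qad F \leq \ad F \leq 2$ forces $\ad \spp = 2$. The paper gives no separate proof (it states the corollary follows immediately from Theorem \ref{assspec}), so your write-up simply makes that deduction explicit.
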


The Assouad dimension does not behave well under H\"{o}lder image, see \cite{quasi}, and so we cannot derive any information from knowledge of the Assouad dimension, despite it being as large as possible.  However, the Assouad spectrum is more regular, and can be controlled in this context, however the control is more complicated than \eqref{holdd}.

\begin{lma}[Theorem 4.11 \cite{Spectraa}] \label{fraseryu}
Let $X,Y \subseteq \mathbb{R}^d$ and $0<\alpha \leq 1 \leq \beta < \infty$.  If $g: X \to Y$ is an $(\alpha, \beta)$-H\"{o}lder homeomorphism, then 
\[
\ad Y \geq \frac{\ad X(1-\theta_0)}{\beta - \alpha \theta_0}
\]
where $\theta_0 = \inf\{ \theta \in (0,1) : \as X = \ad X\}$.  For convenience we write $\inf \emptyset = 1$.
\end{lma}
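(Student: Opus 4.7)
The plan is to transfer `bad' covering estimates from $X$ to $Y$ through the bi-H\"{o}lder map $g$ and then optimise the resulting family of lower bounds on $\ad Y$ over a parameter $\theta$.  First, I would set up a scale-transfer inequality.  The $\alpha$-H\"{o}lderness of $g$ gives $g(B(x,\rho) \cap X) \subseteq B(g(x), C\rho^\alpha)$, while the $\beta^{-1}$-H\"{o}lderness of $g^{-1}$ gives $g^{-1}(B(y,\sigma) \cap Y) \subseteq B(g^{-1}(y), (C\sigma)^{1/\beta}) \cap X$.  Pulling back any $r$-cover of $B(y,R) \cap Y$ via $g^{-1}$ yields a $(Cr)^{1/\beta}$-cover of a set which, by the forward $\alpha$-H\"{o}lder inclusion, contains $B(g^{-1}(y), (R/C)^{1/\alpha}) \cap X$.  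Writing $x = g^{-1}(y)$, this gives
\[
N_r\bigl(B(y,R) \cap Y\bigr) \;\geq\; N_{(Cr)^{1/\beta}}\bigl(B(x, (R/C)^{1/\alpha}) \cap X\bigr).
\]

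Next I would exploit the hypothesis via the Assouad spectrum.  Fix $\theta \in (\theta_0, 1)$; by standard monotonicity and continuity properties of the spectrum together with the definition of $\theta_0$ as an infimum, $\as X = \ad X$.  Unpacking the infimum defining $\as X$, for each $s < \ad X$ there must exist sequences $r_n \to 0$ and $x_n \in X$ such that $N_{r_n}(B(x_n, r_n^\theta) \cap X) \, r_n^{-(\theta-1)s} \to \infty$, since otherwise the covering estimate could be absorbed by a single constant at all scales.  Converting each pair $(r_n, r_n^\theta)$ in $X$ to the pair in $Y$ determined by $r := r_n^\beta/C$ and $R := C r_n^{\alpha\theta}$, so that $R \approx r^{\alpha\theta/\beta}$, the transfer inequality produces
\[
N_r\bigl(B(g(x_n), R) \cap Y\bigr) \;\gtrsim\; r^{(\theta-1)s/\beta}.
\]
Since $R/r \approx r^{\alpha\theta/\beta - 1}$ has negative exponent, comparison with $(R/r)^t$ shows $\ad Y \geq t$ whenever $t(\beta - \alpha\theta) \leq (1-\theta)s$, and then letting $s \nearrow \ad X$ yields $\ad Y \geq (1-\theta)\ad X/(\beta - \alpha \theta)$ at this value of $\theta$.

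Finally I would optimise over $\theta \in (\theta_0, 1)$.  The function $\theta \mapsto (1-\theta)/(\beta - \alpha \theta)$ has derivative $(\alpha - \beta)/(\beta - \alpha \theta)^2 \leq 0$ since $\alpha \leq 1 \leq \beta$, so it is non-increasing and its supremum on $(\theta_0, 1)$ is attained in the limit $\theta \searrow \theta_0$; this delivers the stated bound, while the degenerate case $\theta_0 = 1$ renders the claim vacuous.  The main technical obstacle is the scale-extraction step, namely converting the abstract identity $\as X = \ad X$ into a concrete sequence of witnessing scales for each $s < \ad X$.  The key is that a violation at only finitely many scales could be absorbed by enlarging a single constant, so the covering inequality in fact fails at arbitrarily small $r_n$, which is precisely what is needed to propagate the lower bound onto $\ad Y$.
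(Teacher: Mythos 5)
The paper itself gives no proof of this lemma --- it is quoted directly from \cite{Spectraa} --- so your argument can only be judged on its own terms: it is correct, and it is essentially the expected argument (transfer covering numbers through the two H\"older bounds, extract witnessing scales $r_n \to 0$ from $s < \as X$, pass to the scale pair $r \approx r_n^\beta$, $R \approx r_n^{\alpha\theta}$ in $Y$, and then optimise as $\theta \searrow \theta_0$), which is how results of this type are proved in \cite{Spectraa}. One caveat: your justification that $\as X = \ad X$ for every $\theta \in (\theta_0,1)$ appeals to ``standard monotonicity'', but the Assouad spectrum is not monotone in general; the correct tool is the fact (Corollary 3.6 of \cite{Spectraa}, invoked elsewhere in this paper) that once the spectrum attains $\ad X$ at some $\theta$ it equals $\ad X$ for all larger $\theta$ --- or, more simply, observe that your optimisation step only needs $\theta$ ranging over a sequence in the set $\{\theta : \as X = \ad X\}$ tending to its infimum $\theta_0$, which exists by definition whenever that set is non-empty (the empty case being the vacuous $\theta_0 = 1$ you already handle). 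With that small repair the proof is complete.
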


Applying Lemma \ref{fraseryu} with $g = f^{-1}$ we obtain the following bounds.

\begin{cor} \label{nonbounds1}
If $f : (0,1) \to \spp$ is an $(\alpha, \beta)$-H\"{o}lder homeomorphism, then
\[
\beta \geq \frac{p\alpha}{1+p-2\alpha} \vee 1.
\]
\end{cor}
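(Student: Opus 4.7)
The plan is to apply Lemma \ref{fraseryu} to the inverse map $g = f^{-1} : \spp \to (0,1)$, using $\ad \spp = 2$ (Corollary \ref{assdim}), $\ad (0,1) = 1$, and the Assouad spectrum from Theorem \ref{assspec} to supply the threshold $\theta_0$ required by the lemma.

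First I would dualise the H\"{o}lder exponents. A routine rearrangement of the two-sided inequality defining an $(\alpha,\beta)$-H\"{o}lder homeomorphism for $f$ yields that $f^{-1}$ is $(1/\beta, 1/\alpha)$-H\"{o}lder. Next I would identify $\theta_0 = \inf\{\theta : \as \spp = \ad \spp\}$. From the piecewise formulae of Theorem \ref{assspec}, in both the $p \in (0,1)$ regime (where $\as \spp = 2/((1+p)(1-\theta)) \wedge 2$) and the $p \geq 1$ regime (where $\as \spp = (1 + \theta/(p(1-\theta))) \wedge 2$), solving $\as \spp = 2$ gives the same threshold $\theta_0 = p/(1+p)$.

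Applying Lemma \ref{fraseryu} with $X = \spp$, $Y = (0,1)$ and exponents of $g$ as above now gives
\[
1 \;=\; \ad(0,1) \;\geq\; \frac{2\,\bigl(1-p/(1+p)\bigr)}{1/\alpha - (1/\beta)\,p/(1+p)}.
\]
Clearing denominators produces $\beta(1+p-2\alpha) \geq p\alpha$. When $1+p-2\alpha > 0$, this rearranges to $\beta \geq p\alpha/(1+p-2\alpha)$; when $1+p-2\alpha \leq 0$, the inequality on $\beta$ is automatic and we fall back on the trivial $\beta \geq 1$ built into the bi-H\"{o}lder definition. Taking the maximum with $1$ absorbs both cases and gives the claimed bound. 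There is no real obstacle: the conceptual work sits in Theorem \ref{assspec} and Lemma \ref{fraseryu}, and the only care needed is to match the $(\alpha,\beta)$ conventions of the lemma, whose exponents refer to $g = f^{-1}$ rather than to $f$ itself.
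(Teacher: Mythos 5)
Your proposal is correct and follows essentially the same route as the paper: apply Lemma \ref{fraseryu} to $g=f^{-1}$ viewed as a $(\beta^{-1},\alpha^{-1})$-H\"{o}lder homeomorphism from $\spp$ to $(0,1)$, read off $\theta_0=\frac{p}{p+1}$ from Theorem \ref{assspec}, and rearrange, with the trivial $\beta\geq 1$ absorbing the degenerate case. The only cosmetic remark is that the case $1+p-2\alpha\leq 0$ in fact never arises (since $\alpha\leq 1$ and $\alpha<p$ force $1+p-2\alpha>0$), so your fallback to $\beta\geq 1$ there is harmless but unnecessary.
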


\begin{proof}
Noting that $g = f^{-1}$ is a $(\beta^{-1}, \alpha^{-1})$-H\"{o}lder homeomorphism between $\spp$ and $(0,1)$, and
\[
\theta_0 = \inf\{ \theta \in (0,1) : \as \spp = \ad \spp\} = \frac{p}{p+1}
\]
we obtain
\[
1 \geq \frac{2(1-\frac{p}{p+1})}{\alpha^{-1} - \beta^{-1}\frac{p}{p+1}}
\]
directly from Lemma \ref{fraseryu}.  Rearranging this formula for $\beta$ yields  the desired bound, recalling that $\beta \geq 1$ is trivial. 
\end{proof}

For comparison,  and in order to give an alternative expression of  the bounds in terms of $\alpha$, we summarise the various estimates obtained so far in the following corollary.  

\begin{cor} \label{comparebounds}
Suppose there exists an $(\alpha, \beta)$-H\"{o}lder homeomorphism between $(0,1)$ and $\spp$.  Then
\[
\alpha \leq  \frac{p\beta}{p+\beta} \wedge 1,
\]
and this bound is sharp.  Based on knowledge of the Assouad spectrum, we have
\[
\alpha \leq \frac{p\beta+\beta}{p+2\beta}\wedge 1,
\]
which is not sharp, and based on knowledge of the box dimension, we have
\[
\alpha \leq  \frac{p+1}{2}\wedge 1,
\]
which is also not sharp.
\end{cor}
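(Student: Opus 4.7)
The plan is to note that the corollary is simply a reformulation of the previously established bounds on $\beta$ in terms of $\alpha$, rewritten as bounds on $\alpha$ in terms of $\beta$, together with a comparison of the three resulting estimates. Each of the three inequalities is a one-line algebraic rearrangement, so the real content is the sharpness discussion.

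First I would rewrite the sharp bound from Theorem \ref{sharpbounds2}. Starting from $\beta \ge p\alpha/(p-\alpha)$ and using $p > \alpha$ from Theorem \ref{sharpbounds1}, multiply out to obtain $\beta(p-\alpha) \ge p\alpha$, i.e.\ $\alpha(p+\beta) \le p\beta$, which gives $\alpha \le p\beta/(p+\beta)$. Taking the minimum with the trivial bound $\alpha \le 1$ gives the first inequality. Sharpness is immediate from Corollary \ref{sharpexamples}: given $\beta \ge 1$, choose $\alpha = p\beta/(p+\beta) \wedge 1$ (which lies in $[p/(p+1), p)$) and apply the corollary to produce a homeomorphism attaining the bound.

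Next I would rewrite the Assouad-spectrum bound from Corollary \ref{nonbounds1}. Starting from $\beta \ge p\alpha/(1+p-2\alpha)$ and rearranging yields $\alpha(p+2\beta) \le \beta(p+1)$, hence $\alpha \le (p\beta + \beta)/(p+2\beta)$; again take the minimum with $1$. The box-dimension bound is just the restatement of Corollary \ref{nonbounds2}, already in the desired form. To establish non-sharpness of these two bounds, it suffices to check that each is strictly weaker than the sharp bound whenever the sharp bound is nontrivial. For the Assouad bound, form the difference
\[
\frac{(p+1)\beta}{p+2\beta} - \frac{p\beta}{p+\beta} = \frac{\beta(p+\beta-p\beta)}{(p+2\beta)(p+\beta)},
\]
which is nonnegative precisely when $p\beta \le p+\beta$, and this inequality is automatic whenever the sharp bound $p\beta/(p+\beta)$ is below $1$. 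For the box-dimension bound, note that $p\beta/(p+\beta) < (p+1)/2$ is equivalent to $(\beta-1)(p-\beta) < \beta$, which plainly holds for any admissible $\beta \ge 1$, so the box-dimension bound is weaker than the sharp bound on the whole range.

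I do not expect any serious obstacle: the entire statement is algebraic bookkeeping applied to results already proven earlier in the paper, and the only thing that requires a small comparison argument is the non-sharpness claim, handled by the inequalities above.
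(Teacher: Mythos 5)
Your overall route is exactly the paper's (implicit) one: the corollary is just Theorem \ref{sharpbounds1}/\ref{sharpbounds2}, Corollary \ref{nonbounds1} and Corollary \ref{nonbounds2} solved for $\alpha$, with sharpness of the first bound supplied by Corollary \ref{sharpexamples}, and the paper offers no separate argument. Your handling of the sharp bound and of the Assouad comparison is correct (for sharpness in the regime $p\beta/(p+\beta)\geq 1$ you should add the one-line remark that the $(1,p/(p-1))$-H\"older map from Corollary \ref{sharpexamples} is a fortiori $(1,\beta)$-H\"older because $\beta\geq p/(p-1)$ and distances in the domain are at most $1$; and note that the cleanest way to get $\alpha\leq(p\beta+\beta)/(p+2\beta)$ is directly from the displayed inequality in the proof of Corollary \ref{nonbounds1}, which avoids any sign issue when $1+p-2\alpha\leq 0$).

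The box-dimension comparison, however, is wrong as written. The inequality $p\beta/(p+\beta)<(p+1)/2$ is \emph{not} equivalent to $(\beta-1)(p-\beta)<\beta$: cross-multiplying gives $2p\beta<(p+1)(p+\beta)$, i.e.\ $p\beta<p^2+p+\beta$, i.e.\ $\beta(p-1)<p(p+1)$. Your auxiliary claim that $(\beta-1)(p-\beta)<\beta$ ``plainly holds for any $\beta\geq 1$'' is also false (take $p=100$, $\beta=2$), and so is the assertion that the uncapped comparison holds on the whole range: for $p=4$, $\beta=10$ one has $p\beta/(p+\beta)=20/7>5/2=(p+1)/2$. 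The fix is the same restriction you already imposed for the Assouad bound: in the regime where the sharp bound is nontrivial, i.e.\ $p\beta<p+\beta$, one has $p\beta<p+\beta\leq p^2+p+\beta$, hence $p\beta/(p+\beta)<\frac{p+1}{2}\wedge 1$, so the box bound strictly exceeds the attainable sharp value and is therefore not attained; in the complementary regime (which forces $p>1$ and $\beta\geq p/(p-1)$) all three bounds reduce to $\alpha\leq 1$ and there is nothing to distinguish. Equivalently, since the box bound does not depend on $\beta$, its non-sharpness is already visible at $\beta=1$, where $p/(p+1)<\frac{p+1}{2}\wedge 1$. With that correction your proof is complete and agrees with the paper.
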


Notice that as we relax the restrictions on the inverse map, that is we let $\beta \to \infty$, the bounds obtained from the  Assouad spectrum approach those obtained by the box dimension, and the sharp bounds approach those from Theorem \ref{sharpbounds1}.

\begin{figure}[H]
  \centering
  \includegraphics[width= \textwidth]{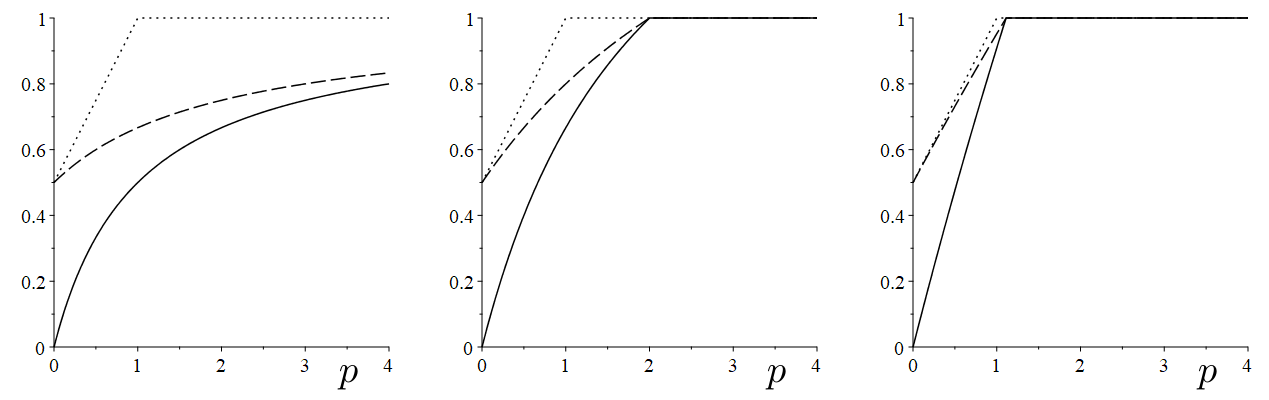}
\caption{Upper bounds on $\alpha$, given the existence of an  $(\alpha, \beta)$-H\"{o}lder homeomorphism between $(0,1)$ and $\spp$.  On the left, $\beta = 1$, in the centre $\beta = 2$, and on the right $\beta=10$.  The sharp bounds are a solid line, the bounds obtained from the Assouad spectrum are a dashed line, and the bounds obtained from the box dimension are a dotted line.   }
\label{fig:app}
\end{figure}

\section{An alternative  proof of Corollary \ref{sharpexamples}} \label{altproof}

In this section we provide an alternative proof of Corollary \ref{sharpexamples} where, instead of considering a natural family of explicit examples, we directly construct a function with the desired properties.  We decided to  include both proofs for the interested reader.  Moreover, we find the proof via Theorem \ref{gt} more natural and appropriate in this setting, but the proof presented here is less reliant on the precise setting of the problem  and may be more straightforward to generalise.  Certain details in the proof will be similar to those from the proof of Theorem \ref{gt} and wil be suppressed.

Fix $p>0$ and $\alpha \in [\frac{p}{p+1}, p) \cap (0,1]$.  We construct a homeomorphism  $f: (0,1) \to \spp$ in four steps.
\begin{enumerate}
\item[Step 1] Partition the interval $(0,1)$ into countably many half open intervals $\mathcal{J}^k=[a,b)$ ($k \geq 1$), labelled from right to left, which satisfy
\[
|\mathcal{J}^k| \approx k^{-p/\alpha}
\]
where the implicit constants are independent of $k$.  This can be done since $\sum_{k \geq 1}   k^{-p/\alpha} \approx 1$.  These intervals will play the role of $\mathcal{I}^k$ for the function $f$, see \eqref{turninginterval}.  
\item[Step 2] For $k \geq 1$, let $g_0^k:\mathcal{J}^k \to [0,|\mathcal{J}^k|^\alpha)$ be defined by
\[
g_0^k (x)= |\mathcal{J}^k|^\alpha -  (\sup \mathcal{J}^k -x)^\alpha.
\]
In particular, $g_0^k$ is $(\alpha,1)$-H\"{o}lder, with implicit constants independent of $k$. 
\item[Step 3] For $k \geq 1$, let $g_1^k:  [0,|\mathcal{J}^k|^\alpha) \to \spp^k$ be a smooth homeomorphism satisfying
\[
\mathcal{H}^1(g_1^k(J)) \approx  |J|
\]
for all open intervals $J \subset [0,|\mathcal{J}^k|^\alpha)$, where the implicit constants are independent of $k$ and $J$.  Such a map exists because $\mathcal{H}^1(\spp^k)  \approx k^{-p}\approx |\mathcal{J}^k|^\alpha$.  Recall that $\spp^k$ is the $k$th full turn, see \eqref{fullturn}, and that $\mathcal{H}^1$ is the 1-dimensional Hausdorff measure.
\item[Step 4] Let $f:(0,1) \to \spp$ be defined by
\[
f \vert_{\mathcal{J}^k} (x) = g_1^k \circ g_0^k (x).
\]
\end{enumerate}

By construction, $f$ is a homeomorphism between $(0,1)$ and $\spp$.  It remains to establish the H\"{o}lder  exponents, which we separate into two claims.
\\ \\
\emph{Claim 1: $f$ is $\alpha$-H\"{o}lder}.
\\ \\
\emph{Proof of Claim 1}.   Let $0<x<y<1$ and, as before,  $y^*\in (0,y)$ be the largest value satisfying 
\[
\arg (f(y^*)) = \arg (f(y))  + \pi.
\]
As in the proof of Theorem \ref{gt}, it suffices to prove
\[
\sup_{y^*<x<y}\frac{|f(x)-f(y)|}{|x-y|^{\alpha}}   \lesssim 1
\]
where the implicit constant is independent of $y$. However, this follows immediately  since $(y^*,y)$ can intersect at most 2 of the intervals $\mathcal{J}^k$.   This relies on the fact that the maps $g_1^k$ are Lipschitz and the maps $g_0^k$ are $\alpha$-H\"{o}lder, both with implicit constants independent of $k$.  
\\ \\
\emph{Claim 2: $f^{-1}$ is $\left(\frac{p\alpha}{p-\alpha}\right)$-H\"{o}lder}.
\\ \\
\emph{Proof of Claim 2}. Let $\beta = \frac{p\alpha}{p-\alpha}$ and  $0<x<y<1$.  Similar to above, for $m \geq 1$, let $y_m \in (0,y)$ be the $m$th largest number satisfying 
\[
\arg (f(y_m)) = \arg (f(y)).
\]
If $x \in [y_{m+1}, y_{m})$ for $m \geq 1$, then $|x-y| \leq |y_{m+1}-y|$ and
\[
|f(x)-f(y)| \geq |f(y_{m})-f(y)|  \gtrsim  |f(y_{m+1})-f(y)|
\]
with implicit constant independent of $m$.  In particular,
\[
\frac{|f(x)-f(y)|}{|x-y|^{\beta}}  \gtrsim \frac{ |f(y_{m+1})-f(y)|}{|y_{m+1}-y|^{\beta}} .
\]
If $y \in \mathcal{J}^l$ for some $l \geq 1$, then $y_{m+1} \in \mathcal{J}^{l+m+1}$ and so 
\begin{eqnarray*}
|y_{m+1}-y|^\beta \lesssim \left(\sum_{k=l}^{l+m+1} |\mathcal{J}^k|\right)^\beta & \approx & \left(\sum_{k=l}^{l+m+1} k^{-p/\alpha}\right)^\beta \\ \\
 &\approx &\left( l^{1-p/\alpha} - (l+m+1)^{1-p/\alpha} \right)^\beta \\ \\
&\leq  &  l^{\beta(1-p/\alpha)} - (l+m+1)^{\beta(1-p/\alpha)} \\ \\
&=  & l^{-p} - (l+m+1)^{-p}
\end{eqnarray*}
and
\[
|f(y_{m+1})-f(y)| \approx  \sum_{k=l}^{l+m+1} k^{-p-1} \approx l^{-p} - (l+m+1)^{-p}.
\]
Therefore
\[
\frac{|f(x)-f(y)|}{|x-y|^{\beta}}  \gtrsim 1
\]
as required.

Finally, suppose $x \in (y_1,y)$.  If $x \in [y^*,y)$, where $y^*$ is as above, then
\[
|f(x)-f(y)| \gtrsim |x-y|
\]
 since $(y^*,y)$ can intersect at most 2 of the intervals $\mathcal{J}^k$.   This relies on the  fact that the maps $g_1^k$ are bi-Lipchitz on the interval $(y^*,y)$ since it only corresponds to a half turn in $\spp$, and the maps $g_0^k$ are $(\alpha,1)$-H\"{o}lder.  If $x \in [y_1,y^*)$,  and $y \in \mathcal{J}^l$ for some $l \geq 1$, then
 \[
\frac{|f(x)-f(y)|}{|x-y|^{\beta}}  \gtrsim \frac{|f(y_1)-f(y)|}{|y_1-y|^{\beta}} \gtrsim \frac{l^{-p-1}}{l^{-\beta p/\alpha}} \gtrsim 1
\]
completing the proof.  Note that the final bound relies on the assumption that $\alpha \geq p/(p+1)$.

\section{Reduction to  bi-Lipschitz classes} \label{reduction}

In this section we prove a simple equivalence which extends our results to a much broader class of functions, as well as providing an example showing that our results do \emph{not} generally hold in a slightly broader class still.

\begin{thm} \label{lipclass}
Let  $\phi$ be a  winding function such that the function $\eps:[1,\infty) \to (0,\infty)$ defined by
\[
\eps(x) = \phi(x) x^p
\]
is Lipschitz and uniformly bounded away from 0 and $\infty$.  Then $\spp$ and $\spi(\phi)$ are bi-Lipschitz equivalent, that is, there is a bi-Lipschitz homeomorphism between  $\spp$ and $\spi(\phi)$.
\end{thm}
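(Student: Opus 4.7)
The plan is to construct a bi-Lipschitz map $F\colon \spp \to \spi(\phi)$ by matching parameters.  Define
\[
F\bigl(x^{-p} e^{i x}\bigr) = \phi(x)\, e^{i x}, \qquad x \in (1,\infty),
\]
or equivalently $F(z) = \eps(|z|^{-1/p})\, z$, noting that the parameter $x$ of a point $z \in \spp$ is uniquely determined by its modulus via $x = |z|^{-1/p}$.  Continuity of $\eps$ and strict monotonicity of $\phi$ guarantee that $F$ is a homeomorphism with inverse the analogous parameter-matching map $\spi(\phi) \to \spp$.

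For the bi-Lipschitz bounds I would fix $z_i = x_i^{-p} e^{i x_i}$, let $w_i = F(z_i) = \phi(x_i) e^{i x_i}$ and invoke the cosine identity
\[
|z_1 - z_2|^2 = (x_1^{-p} - x_2^{-p})^2 + 4 x_1^{-p} x_2^{-p} \sin^2\!\bigl(\tfrac{x_1 - x_2}{2}\bigr),
\]
together with the identical formula for $|w_1 - w_2|^2$ in which $x_i^{-p}$ is replaced by $\phi(x_i) = \eps(x_i) x_i^{-p}$.  The ``angular'' contribution is comparable in the two formulas because $c^2 \le \phi(x_1)\phi(x_2)/(x_1^{-p} x_2^{-p}) \le C^2$ directly from $\eps \in [c,C]$.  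For the ``radial'' contribution I would decompose
\[
\phi(x_1) - \phi(x_2) = \eps(x_1)\bigl(x_1^{-p} - x_2^{-p}\bigr) + \bigl(\eps(x_1) - \eps(x_2)\bigr) x_2^{-p}
\]
and feed in both the bounds $c \le \eps \le C$ and the Lipschitz bound $|\eps(x_1) - \eps(x_2)| \le L|x_1 - x_2|$ to compare $|\phi(x_1) - \phi(x_2)|$ with $|x_1^{-p} - x_2^{-p}|$.

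The hardest case is the lower bound when $x_1 - x_2$ is close to a nonzero integer multiple of $2\pi$, since then $\sin^2((x_1-x_2)/2)$ is nearly zero and the entire bi-Lipschitz estimate hinges on the radial comparison.  Here I plan to use the mean value representation $\phi'(\xi) = \xi^{-p-1}\bigl(\xi \eps'(\xi) - p\eps(\xi)\bigr)$ together with the hypotheses on $\eps$ to show $|\phi'(\xi)| \approx \xi^{-p-1}$, and then sub-divide $[x_1, x_2]$ into pieces on which $\eps$ varies by a small controlled amount and sum.  The case of widely separated $x_1, x_2$ is easier, since there $|z_1 - z_2| \approx |z_1| \vee |z_2|$ and the analogous two-sided estimate for $|w_1 - w_2|$ follows directly from $\eps \in [c,C]$.
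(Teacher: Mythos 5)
Your construction and overall strategy coincide with the paper's: the parameter-matching map $F(x^{-p}e^{ix})=\phi(x)e^{ix}$, the law-of-cosines split of $|z_1-z_2|^2$ into a radial part $(x_1^{-p}-x_2^{-p})^2$ and an angular part, comparability of the angular parts via $c\le\eps\le C$, and the decomposition $\phi(x_1)-\phi(x_2)=\eps(x_1)(x_1^{-p}-x_2^{-p})+(\eps(x_1)-\eps(x_2))x_2^{-p}$ for the radial part. You have also correctly isolated where all of the difficulty sits: pairs with $x_1\approx x_2$ and $x_1-x_2$ near a nonzero multiple of $2\pi$, where the angular term degenerates and everything rests on the two-sided comparison of $|\phi(x_1)-\phi(x_2)|$ with $|x_1^{-p}-x_2^{-p}|$. (The paper proves the forward inequality by exactly your decomposition and dismisses the reverse as ``similar and omitted''.)

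The gap is in your resolution of that case: the claim $|\phi'(\xi)|\approx\xi^{-p-1}$ does not follow from the hypotheses. Writing $\phi'(\xi)=\xi^{-p-1}\bigl(\xi\eps'(\xi)-p\eps(\xi)\bigr)$ (valid a.e.), the hypotheses only give $|\eps'|\le L$ and $c\le\eps\le C$, so the bracket can range over all of $[-L\xi-pC,\,0)$: it is neither bounded in modulus by a constant nor bounded away from $0$. Concretely, taking $\eps'(\xi)=p\eps(\xi)\xi^{-1}(1-\xi^{-N})$ on a single turn $[x,x+2\pi]$ keeps $\eps$ Lipschitz and bounded and $\phi$ strictly decreasing, yet gives $\phi(x)-\phi(x+2\pi)\approx x^{-p-1-N}$ while $x^{-p}-(x+2\pi)^{-p}\approx x^{-p-1}$, so the lower Lipschitz bound for $F$ fails at the angularly aligned pair $x^{-p}e^{ix}$, $(x+2\pi)^{-p}e^{ix}$. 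Symmetrically, letting $\eps$ drop with slope $-L$ across one turn (recovering slowly afterwards) gives $\phi(x)-\phi(x+2\pi)\approx x^{-p}\gg x^{-p-1}$, destroying the upper bound as well. No subdivision of $[x_1,x_2]$ repairs this: the obstruction is that Lipschitz control of $\eps$ at unit scale is weaker by a factor of $x$ than what the radial comparison requires. What would make your argument (and the paper's) go through is the stronger decay hypothesis $|\eps(x)-\eps(y)|\lesssim|x-y|/y$, i.e.\ that $\eps\circ\exp$ is Lipschitz; as stated, the hypotheses do not suffice for the parameter-matching map, and this is a genuine gap shared with the printed proof rather than a defect specific to your write-up.
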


\begin{proof}
Since $\eps$ is Lipschitz, there exists a constant $L>0$ such that
\[
|\eps(x)-\eps(x)| \leq L|x-y|
\]
for all $x,y \geq 1$.  Let $F:\spp \to \spi(\phi)$ be defined by
\[
F\left(x^{-p} \exp(ix) \right) = \phi(x) \exp(ix)
\]
and consider points $x>y \geq 1$.  If $ x-y \, (\textup{mod }  2\pi)  \notin (\pi/2 , 3\pi/2)$, then we immediately get
\begin{eqnarray*}
 \frac{|\phi(x) \exp(ix)-\phi(y) \exp(iy)|}{|x^{-p} \exp(ix)-y^{-p} \exp(iy)|}  \approx \frac{\phi(y)}{y^{-p}} = \eps(y) \approx 1.
\end{eqnarray*}
 If $ x-y \, (\textup{mod }  2\pi)  \in (\pi/2 , 3\pi/2)$, then a slightly more complicated argument  is needed.  Applying the bounds $1-z^2/2 \leq \cos z \leq  1- z^2/3$ ($|z| \leq \pi/2$) we get
\begin{eqnarray}
 \frac{|\phi(x) \exp(ix)-\phi(y) \exp(iy)|}{|x^{-p} \exp(ix)-y^{-p} \exp(iy)|} &=&  \frac{ \sqrt{\phi(x)^2+\phi(y)^2 - 2\phi(x)\phi(y)  \cos(x-y) }}{ \sqrt{x^{-2p}+y^{-2p} - 2x^{-p}y^{-p}  \cos(x-y) }} \label{old} \\  \nonumber \\
&\leq & \frac{ \sqrt{(\phi(y)-\phi(x))^2 + \phi(x)\phi(y) (x-y)^2 }}{ \sqrt{(y^{-p}-x^{-p})^2 + \frac{2}{3}x^{-p}y^{-p} (x-y)^2 }} \nonumber \\  \nonumber \\
&\lesssim &  \frac{ \sqrt{(y^{-p}\eps(y)-x^{-p}\eps(x))^2 + x^{-p}y^{-p}(x-y)^2 }}{ \sqrt{(y^{-p}-x^{-p})^2 + x^{-p}y^{-p} (x-y)^2 }}. \nonumber
\end{eqnarray}
Using the fact that  $\eps(x) \geq \eps(y)-L(x-y)$ we get
\begin{eqnarray*}
(y^{-p}\eps(y)-x^{-p}\eps(x))^2 &\leq& (y^{-p}\eps(y)-x^{-p}(\eps(y)-L(x-y)))^2 \\ 
&=& \eps(y)^2(y^{-p}-x^{-p})^2 +2Lx^{-p}\eps(y)(y^{-p}-x^{-p}) (x-y) \\
&\,& \qquad +  L^2x^{-2p}(x-y)^2 \\ 
&\lesssim& (y^{-p}-x^{-p})^2 +x^{-p}(y^{-p}-x^{-p}) (x-y) +  x^{-p}y^{-p}(x-y)^2 \\ 
&\lesssim& (y^{-p}-x^{-p})^2 +  x^{-p}y^{-p}(x-y)^2 .
\end{eqnarray*}
Here the final line follows since the middle of the three terms in the previous line is bounded above by the maximum of the other two.  This proves that \eqref{old} is $\lesssim 1$, proving that $F$ is Lipschitz.  The proof that \eqref{old}  is also $\gtrsim 1$ is similar and omitted and establishes that $F$ is bi-Lipschitz as required. 
\end{proof}

An immediate consequence of Theorem \ref{lipclass} is that we can replace $\spp$ with $\spi(\phi)$  in all of our main results in this paper where $\phi$ is any winding function such that $ \phi(x) x^p$ is Lipschitz and uniformly bounded away from 0 and $\infty$.  In particular, in Theorems \ref{sharpbounds1}, \ref{sharpbounds2}, and Corollary \ref{sharpexamples}, as well as the dimension results in Theorem \ref{boxdim} and \ref{assspec}.  For example, $\phi(x)$ can be the reciprocal of any  polynomial of degree $p$ which is strictly increasing on $[1, \infty)$.  More complicated  functions  also work, including many non-differentiable functions or non-polynomial functions, such as
\[
\phi(x) = \frac{3}{5 x^{p}x^{-1/x}+x^{p/2}\log(x)}
\]
which is comparable to $x^{-p}$ in the above sense.  It would be interesting to push Theorem \ref{lipclass} further, with the most natural class to consider being $\phi$ such that $\phi(x)x^p$ is not uniformly bounded away from 0 and $\infty$, but can be controlled by a lower order function, such as $\log(x)$.  For example, are $\spp$ and $\spi(\phi)$ bi-Lipschitz equivalent for $\phi(x) = x^{-p}\log x$?  In fact this turns out to be false, which we show by adapting the arguments from Theorem \ref{sharpbounds1} and  \ref{sharpbounds2}.  In the following result, compare the strict lower bound for $\beta$ with the corresponding bound from Theorem \ref{sharpbounds2}.

\begin{thm}
Let $p,\gamma>0$ and $\phi(x) = x^{-p}(\log x)^\gamma$.  If  $f : (0,1) \to \spi(\phi)$ is an $(\alpha, \beta)$-H\"{o}lder  homeomorphism, then $\alpha <p$ and
\[
\beta > \frac{p\alpha}{p-\alpha}.
\]
\end{thm}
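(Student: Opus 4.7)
The plan is to carefully adapt the proofs of Theorems~\ref{sharpbounds1} and~\ref{sharpbounds2}, treating the extra factor $(\log x)^\gamma$ as a lower-order perturbation that nonetheless forces strict inequality in the conclusions. The arithmetic will be essentially the same as in the $\phi = \phi_p$ case, but with every estimate carrying an additional logarithmic term which becomes decisive at the critical exponent.

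First I would establish the basic full-turn estimate. Decompose $\spi(\phi) = \bigcup_{k\geq 1} \spi(\phi)^k$ as in \eqref{fullturn} and set $\mathcal{I}^k = f^{-1}(\spi(\phi)^k)$ as in \eqref{turninginterval}. Since $x \approx k$ on the $k$-th turn, $\phi(x) \approx k^{-p}(\log k)^\gamma$ there, giving $|\spi(\phi)^k| \approx k^{-p}(\log k)^\gamma$. The $\alpha$-H\"older upper bound then yields $|\mathcal{I}^k| \gtrsim k^{-p/\alpha}(\log k)^{\gamma/\alpha}$. Summing and using $\sum_k |\mathcal{I}^k| = 1$ forces $\sum_k k^{-p/\alpha}(\log k)^{\gamma/\alpha} < \infty$, which requires $p/\alpha > 1$, i.e.\ $\alpha < p$ (note that $\alpha = p$ would leave the divergent tail $\sum k^{-1}(\log k)^{\gamma/p}$).

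Next I would adapt the proof of Theorem~\ref{sharpbounds2}. Extend $f$ continuously to $[0,1]$ with $f(0)=0$ and set $x_l = \sum_{k \geq l} |\mathcal{I}^k|$, so that $f(x_l)$ sits on the boundary between consecutive turns with $|f(x_l)| \approx l^{-p}(\log l)^\gamma$. The $\beta$-H\"older bound on $f^{-1}$, combined with the tail asymptotic
\[
\sum_{k \geq l} k^{-p/\alpha}(\log k)^{\gamma/\alpha} \approx l^{1-p/\alpha}(\log l)^{\gamma/\alpha}
\]
(valid since $p/\alpha > 1$, by elementary integral comparison or integration by parts), yields
\[
1 \;\lesssim\; \frac{|f(x_l)|}{x_l^\beta} \;\lesssim\; \frac{l^{-p}(\log l)^\gamma}{l^{\beta(1-p/\alpha)}(\log l)^{\beta\gamma/\alpha}} \;=\; l^{-p-\beta(1-p/\alpha)}(\log l)^{\gamma(1-\beta/\alpha)}.
\]
The strictness emerges here: if $\beta \leq p\alpha/(p-\alpha)$ with strict inequality, the power of $l$ is strictly negative and the right-hand side tends to $0$, as in the original proof. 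At the critical value $\beta = p\alpha/(p-\alpha)$ the power of $l$ vanishes, leaving the logarithmic factor $(\log l)^{\gamma(1-\beta/\alpha)}$; but $\alpha < p$ forces $\beta > \alpha$, so $1 - \beta/\alpha < 0$, and since $\gamma > 0$ this factor also tends to $0$. Either way we contradict the uniform lower bound $\gtrsim 1$, forcing $\beta > p\alpha/(p-\alpha)$.

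The only genuine (and minor) technical point is the tail asymptotic for $\sum_{k \geq l} k^{-p/\alpha}(\log k)^{\gamma/\alpha}$; everything else amounts to bookkeeping on the logarithmic factors threaded through the original arguments. I do not expect any serious obstacle, and in particular the same strategy should give analogous strict bounds for any winding function whose ratio with $x^{-p}$ varies slowly and tends to $\infty$ (or to $0$) at infinity.
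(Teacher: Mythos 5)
Your proposal is correct and follows essentially the same route as the paper: the same full-turn decomposition giving $|\mathcal{I}^k| \gtrsim k^{-p/\alpha}(\log k)^{\gamma/\alpha}$, the same divergent-tail argument for $\alpha<p$, and the same evaluation at $x_l$ where the critical case $\beta = p\alpha/(p-\alpha)$ is killed by the leftover factor $(\log l)^{\gamma(1-\beta/\alpha)} \to 0$ since $\beta/\alpha = p/(p-\alpha) > 1$. The tail asymptotic you flag is indeed routine (only the lower bound on the tail is needed), so there is no gap.
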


\begin{proof}
Analogous to \eqref{fullturn} and \eqref{turninginterval}, let
\[
\spi^k = \{ \phi(x) \exp(ix) : 1+2\pi (k-1)<x \leq 1+2\pi k\}
\]
for integer $k \geq 1$ and $\mathcal{I}^k = f^{-1}(\spi^k)$.  We have
\begin{equation} \label{turningest2}
k^{-p}(\log k)^\gamma \approx  |\spi^k | =  |f(\mathcal{I}^k) |  \lesssim |\mathcal{I}^k|^\alpha
\end{equation}
and therefore
\[
1 =\sum_{k=1}^\infty  |\mathcal{I}^k| \gtrsim \sum_{k=1}^\infty  k^{-p/\alpha} (\log k)^{\gamma/\alpha}
\]
which forces $\alpha<p$.  Suppose
\[
 \beta \leq \frac{p\alpha}{p-\alpha}
\]
and, for integer $l \geq 1$, let
\[
x_l=\sum_{k=l}^\infty |\mathcal{I}^k|.
\]
 Extending $f$ continuously to $[0,1]$ and applying  \eqref{turningest2} yields
\[
1 \lesssim \frac{|f(x_l)-f(0)|}{|x_l|^\beta} \lesssim \frac{l^{-p}(\log l)^\gamma}{\left( \sum_{k=l}^\infty |\mathcal{I}^k| \right)^\beta} \lesssim  \frac{l^{-p}(\log l)^\gamma}{\left( \sum_{k=l}^\infty k^{-p/\alpha} (\log k)^{\gamma/\alpha} \right)^\beta}  \lesssim  \frac{l^{-p}(\log l)^\gamma}{ l^{\beta(1-p/\alpha)} (\log l)^{\beta \gamma/\alpha}} \to 0,
\]
a contradiction.  To see the final convergence, note that the polynomial part will tend to 0, dominating the logarithmic part, unless $\beta = p\alpha/(p-\alpha)$ in which case the polynomial part disappears.  However, in this case $\beta/\alpha>1$ and the logarithmic part tends to 0.
\end{proof}




\begin{samepage}

\subsection*{Acknowledgements}

The  author was   supported by an \emph{EPSRC Standard Grant} (EP/R015104/1).  He thanks Han Yu for several stimulating conversations on  spiral winding and the Assouad spectrum, and David Dritschel for a helpful explanation of spiral formation in the context of $\alpha$-models for fluid turbulence.

\end{samepage}

\vspace{10mm}

\begin{samepage}

\noindent \emph{Jonathan M. Fraser\\
School of Mathematics and Statistics\\
The University of St Andrews\\
St Andrews, KY16 9SS, Scotland} \\
\noindent  Email: jmf32@st-andrews.ac.uk\\ \\

\end{samepage}

\end{document}